\newtheorem{thm}{\bf{Theorem}}[section]
\newtheorem{lemma}[thm]{\bf{Lemma}}
\newtheorem{prop}[thm]{\bf{Proposition}}
\newtheorem{rem}[thm]{\bf{Remark}}
\begin{document}

\title[]{Inexact cuts in Deterministic and Stochastic Dual Dynamic Programming applied to linear optimization problems}

\maketitle

\vspace*{0.5cm}

\begin{center}
\begin{tabular}{c}
Vincent Guigues\\
School of Applied Mathematics, FGV\\
Praia de Botafogo, Rio de Janeiro, Brazil\\ 
{\tt vguigues@fgv.br}\\
\end{tabular}
\end{center}

\vspace*{0.5cm}

\begin{abstract} We introduce an extension of Dual Dynamic
Programming (DDP) to solve linear dynamic programming equations.
We call this extension IDDP-LP which 
applies to situations where some or all primal and dual subproblems to be solved along the iterations
of the method
are solved with a bounded error (inexactly). 
We provide convergence theorems both in the case when errors are bounded and
for asymptotically vanishing errors.
We extend the analysis to stochastic linear dynamic programming equations,
introducing Inexact Stochastic Dual Dynamic Programming for linear programs (ISDDP-LP), an inexact variant of SDDP 
applied to linear programs corresponding to the situation where some or all problems to be solved in the forward and backward passes of SDDP 
are solved approximately. We also provide convergence theorems for ISDDP-LP for  bounded and asymptotically vanishing errors.
Finally, we present the results of numerical experiments comparing SDDP and ISSDP-LP on a
portfolio problem with direct transaction costs modelled as a multistage stochastic linear optimization problem. On these experiments, ISDDP-LP allows us to obtain a good policy faster than SDDP.\\
\end{abstract}

\keywords{Stochastic programming \and Decomposition algorithms \and Monte Carlo sampling \and SDDP \and Inexact cuts in SDDP}\\

\par AMS subject classifications: 90C15, 90C90.

\section{Introduction}

Multistage stochastic convex programs are useful to model many real-life applications in engineering and finance, see for instance  
\cite{shadenrbook} and references therein. A popular solution method for such problems is Stochastic Dual Dynamic Programming (SDDP, pioneered by \cite{pereira})
which introduces sampling in the Nested Decomposition (ND) algorithm \cite{birgemulti, birge-louv-book}.
It was extended and analysed in several publications: extension for problems with interstage dependent processes \cite{morton}, \cite{guiguescoap2013},
adaptations for risk-averse problems \cite{guiguesrom12,guiguesrom10,shapsddp,kozmikmorton},
regularizations \cite{powellasamov,guilejtekregsddp}, cut selection \cite{pfeifferetalcuts,dpcuts0,guiguesbandarra17}, extension to problems with 
integer variables \cite{shabbirzou},
convergence proofs for linear programs \cite{philpot}, for nonlinear risk-neutral programs \cite{lecphilgirar12}, and for nonlinear risk-averse programs \cite{guiguessiopt2016}. Recently, 
Inexact SDDP (ISDDP) was proposed in \cite{guigues2016isddp}: it uses inexact cuts in SDDP applied to Multistage Stochastic NonLinear Problems (MSNLPs).
The motivations for ISDDP are twofold:
\begin{itemize}
\item[(i)]  first, when SDDP is applied to nonlinear problems, only approximate solutions for the subproblems solved in the backward
and forward passes are available. ISDDP allows us to build valid cuts on the basis of approximate solutions to these subproblems.
\item[(ii)] Second, for the first iterations and the  first stages, the cuts computed by SDDP can be quite
distant from the corresponding recourse function in the neighborhood of the trial point at which the cut was computed
(see for instance the numerical experiments in \cite{guiguesbandarra17, guiguesejor2017}), 
making this cut dominated by other "more relevant" cuts in this neighborhood as the method progresses.
Therefore, it is natural to try and solve less accurately, inexactly, the subproblems in the forward and backward passes
for the first iterations and stages and to increase the precision of the computed solutions as the algorithm progresses
to decrease the overall computational bulk.
\end{itemize}
The goal of this paper is to pursue this line of research considering linear instead of nonlinear programs.
More precisely, we propose and analyse a variant of SDDP applied to Multistage Stochastic Linear Programs (MSLPs)
called ISDDP-LP (Inexact SDDP for Linear Programs), which allows us to build cuts, called inexact cuts, on the basis of feasible (not
necessarily optimal and eventually far from optimal)
solutions to the subproblems solved in the forward and backward passes of the method.
The combination of inexact cuts with Benders Decomposition \cite{benders}
was first proposed by \cite{philpzakeri} for two-stage stochastic linear programs.
Therefore, ISDDP-LP can be seen as an extension to a multistage setting of the 
algorithm presented in \cite{philpzakeri}.

The main results of this paper are the following:
\begin{itemize}
\item[(A)] we propose an extension of DDP (Dual Dynamic Programming, the deterministic counterpart of SDDP) 
called IDDP-LP (Inexact DDP for Linear Programs) which builds inexact cuts for the cost-to-go functions.
For a problem with $T$ periods, when noises (error terms quantifying the inexactness) are bounded by ${\bar \delta}$
in the forward pass and by ${\bar{\varepsilon}}$ in the backward pass, we show in Theorem \ref{conviddplp}
that the limit superior of the sequence of upper bounds is at most 
$({\bar \delta} + {\bar{\varepsilon}}) \frac{T(T+1)}{2}$ distant to the optimal value of the problem
and the limit inferior of the sequence of lower bounds is at most ${\bar{\delta}} T  + {\bar{\varepsilon}} (T-1)$ distant to this optimal value.
When noises asymptotically vanish, we show that IDDP-LP solves the original optimization problem.
\item[(B)] The study of IDDP-LP allows us to introduce and analyse ISDDP-LP which builds inexact cuts for the cost-to-go functions
of a MSLP. We provide a convergence theorem (Theorem \ref{convisddplp}) for ISDDP-LP when noises are bounded
and show in Theorem \ref{convisddp1} that ISDDP-LP solves the original MSLP when noises asymptotically vanish.
\item[(C)] We compare the computational bulk of SDDP and  ISDDP-LP
on four instances of a portfolio optimization problem with direct transaction costs.
On these instances, ISDDP-LP allows us to obtain a good policy faster than SDDP (compared to SDDP, 
with ISDDP-LP the CPU time decreases
by a factor of 6.2\%, 6.4\%, 6.5\%, and 11.1\% for the four instances considered).
It  is also interesting to notice that once SDDP is implemented on a MSLP, the implementation of the corresponding ISDDP-LP
with given parameters $(\delta_t^k$, $\varepsilon_t^k)$ is straightforward.
Therefore, if for a given application, or given classes of problems, we can find suitable choices of parameters $(\delta_t^k$, $\varepsilon_t^k)$
either using the rules from Remark \ref{remchoicepssto}, other rules, or "playing" with these parameters running ISDDP-LP
on instances, ISDDP-LP could allow us to solve similar new instances quicker than SDDP.
\end{itemize}

The paper is organized as follows. In Section \ref{sec:computeinexactcuts} we explain how to build 
inexact cuts for the value function of a linear program (this elementary observation is used to build cuts
in  IDDP-LP and ISDDP-LP).
In Section \ref{iddpcon} we introduce and analyse IDDP-LP while in Section \ref{iddp} we introduce and analyse ISDDP-LP.
Numerical simulations are presented in Section \ref{numexp}.

\section{Computing inexact cuts for the value function of a linear program}\label{sec:computeinexactcuts}

Let $X \subset \mathbb{R}^m$ and let $\mathcal{Q}:X\rightarrow {\overline{\mathbb{R}}}$ be the value function given by
\begin{equation} \label{vfunctionlp}
\mathcal{Q}(x)=\left\{
\begin{array}{l}
\min_{y \in \mathbb{R}^n} \;c^T y\\
y \in Y(x):=\{y \in \mathbb{R}^n : Ay + B x = b, C y \leq f\},
\end{array}
\right.
\end{equation}
for matrices and vectors of appropriate sizes. We assume:\\
\par (H) for every $x \in X$, the set $Y(x)$ is nonempty and 
$y \rightarrow c^T y$ is bounded from below on $Y(x)$.\\
\par If Assumption (H) holds then $\mathcal{Q}$ is convex and finite on $X$ and by duality we can write
\begin{equation} \label{dualvfunctionlp}
\mathcal{Q}(x)=\left\{
\begin{array}{l}
\max_{\lambda, \mu} \;\lambda^T (b-Bx) + \mu^Tf\\
A^T \lambda + C^T  \mu = c, \mu \leq 0,
\end{array}
\right.
\end{equation}
for $x \in X$. We will call cut for $\mathcal{Q}$ on $X$ an affine lower bounding function for $\mathcal{Q}$ on $X$.
We say that cut $\mathcal{C}$ is inexact at $\bar x$ for convex function $\mathcal{Q}$
if the distance
$\mathcal{Q}( \bar x ) - \mathcal{C}( \bar x)$ between the 
values of $\mathcal{Q}$ and of the cut at $\bar x$ is strictly positive. When $\mathcal{Q}( \bar x )= \mathcal{C}( \bar x)$ we will say that cut 
$\mathcal{C}$ is exact at $\bar x$.

The following simple proposition will be used in the sequel: it provides an inexact cut for $\mathcal{Q}$ at $\bar x \in X$
on the basis of an approximate solution of \eqref{dualvfunctionlp}: 
\begin{prop} \label{inexactlp}
Let Assumption (H) hold and let $\bar x \in X$.

Let $(\hat \lambda, \hat \mu)$ be an $\epsilon$-optimal basic feasible solution for dual problem \eqref{dualvfunctionlp}
written for $x= \bar x$ (it is in particular an extreme point of the feasible set), i.e., $A^T {\hat \lambda} + C^T  {\hat \mu} = c$, $\hat \mu \leq 0$, and
\begin{equation}\label{dualsoleps}
 {\hat \lambda}^T (b-B {\bar x}) + {\hat \mu}^T f \geq \mathcal{Q}( \bar x ) - \varepsilon,
\end{equation}
for some $\varepsilon \geq 0$. Then 
the affine function
$$
\mathcal{C}(x):= {\hat \lambda}^T (b-Bx) + {\hat \mu}^T f
$$
is a cut for $\mathcal{Q}$ at $\bar x$, i.e., for every $x \in X$ we have
$\mathcal{Q}(x) \geq \mathcal{C}(x)$ and the distance 
$\mathcal{Q}( \bar x ) - \mathcal{C}( \bar x)$ between the 
values of $\mathcal{Q}$ and of the cut at $\bar x$ is at most
$\varepsilon$. 
\end{prop}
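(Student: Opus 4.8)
The plan is to exploit the one structural feature of the dual representation \eqref{dualvfunctionlp} that makes the whole statement trivial: the feasible set of the dual problem, namely $\{(\lambda,\mu): A^T\lambda + C^T\mu = c,\ \mu \leq 0\}$, does not depend on the parameter $x$. All dependence on $x$ is confined to the dual objective $\lambda^T(b-Bx)+\mu^T f$, which moreover is affine in $x$ for every fixed $(\lambda,\mu)$. This single observation cleanly separates the two assertions to be proved.

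For the lower-bounding (cut) property, I would fix an arbitrary $x \in X$ and note that, because $(\hat\lambda,\hat\mu)$ satisfies $A^T\hat\lambda + C^T\hat\mu = c$ and $\hat\mu \leq 0$, it is a feasible point of the dual problem \eqref{dualvfunctionlp} written for this $x$ — the feasibility constraints never reference $x$. Since under (H) strong duality gives that $\mathcal{Q}(x)$ equals the maximum of the dual objective over all such feasible points, the value of that objective at the particular feasible point $(\hat\lambda,\hat\mu)$ can only be smaller, whence $\mathcal{Q}(x) \geq \hat\lambda^T(b-Bx)+\hat\mu^T f = \mathcal{C}(x)$. As $x \in X$ was arbitrary and $\mathcal{C}$ is affine by construction, $\mathcal{C}$ is a cut for $\mathcal{Q}$ on $X$.

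For the bound on the gap at $\bar x$, I would simply evaluate $\mathcal{C}$ at $\bar x$ and invoke the $\varepsilon$-optimality hypothesis \eqref{dualsoleps}, namely $\mathcal{C}(\bar x) = \hat\lambda^T(b-B\bar x)+\hat\mu^T f \geq \mathcal{Q}(\bar x)-\varepsilon$, which rearranges directly to $\mathcal{Q}(\bar x)-\mathcal{C}(\bar x)\leq \varepsilon$. Combined with the cut property specialized to $x=\bar x$, which already yields $\mathcal{Q}(\bar x)-\mathcal{C}(\bar x)\geq 0$, this confines the distance to the interval $[0,\varepsilon]$.

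There is essentially no hard step: the entire content is the remark that the dual feasible region is $x$-independent, so that a single dual-feasible point furnishes a globally valid affine minorant whose slope and intercept are read off directly from $(\hat\lambda,\hat\mu)$. The only point requiring care is to ensure that Assumption (H) is genuinely in force, so that strong duality holds and $\mathcal{Q}(x)$ coincides with the dual optimal value in \eqref{dualvfunctionlp} for every $x\in X$; this is precisely what licenses the inequality $\mathcal{Q}(x)\geq \mathcal{C}(x)$ in the first step.
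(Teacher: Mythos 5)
Your proposal is correct and is essentially the paper's own proof: both arguments rest on the observation that the dual feasible set in \eqref{dualvfunctionlp} is independent of $x$, so the feasibility of $(\hat\lambda,\hat\mu)$ makes $\mathcal{C}(x)$ a valid lower bound on $\mathcal{Q}(x)$ for every $x\in X$, while the $\varepsilon$-optimality condition \eqref{dualsoleps} evaluated at $\bar x$ directly yields $\mathcal{Q}(\bar x)-\mathcal{C}(\bar x)\leq\varepsilon$. You spell out the $x$-independence of the dual constraints more explicitly than the paper does (and note, in passing, that weak duality alone already suffices for the cut property), but the route is the same.
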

\begin{proof} $\mathcal{C}$ is indeed a cut for $\mathcal{Q}$ (an affine lower bounding function for $\mathcal{Q}$) because $(\hat \lambda, \hat \mu)$ is feasible for optimization problem \eqref{dualvfunctionlp}.
Relation \eqref{dualsoleps} gives the upper bound $\varepsilon$ for $\mathcal{Q}( \bar x ) - \mathcal{C}( \bar x)$.\hfill
\end{proof}

\section{Inexact cuts in DDP applied to linear programs} \label{iddpcon}

\subsection{Algorithm}

Consider the linear program
\begin{equation}\label{pborigdet}
\begin{array}{l}
\displaystyle \min_{x_1,\ldots,x_T \in \mathbb{R}^n} \sum_{t=1}^T c_t^T x_t \\
A_{t} x_{t} + B_{t} x_{t-1} = b_t, \;x_t \geq 0,\;t=1,\ldots,T,
\end{array}
\end{equation}
where $x_0$ is given.
For this problem we can write the following dynamic programming equations: for $t=1,\ldots,T$,
\begin{equation}\label{firststodpd}
\mathcal{Q}_t( x_{t-1} ) = \left\{
\begin{array}{l}
\displaystyle \min_{x_t \in \mathbb{R}^n} c_t^T x_t + \mathcal{Q}_{t+1} ( x_t )\\
A_{t} x_{t} + B_{t} x_{t-1} = b_t, x_t \geq 0
\end{array}
\right.
\end{equation}
with the convention that $\mathcal{Q}_{T+1}$ is null.
Clearly, the optimal value of \eqref{pborigdet} is $\mathcal{Q}_1( x_0 )$.

For convenience, we will denote 
$$
X_t(x_{t-1}):=\{x_t \in \mathbb{R}^n : A_{t} x_{t} + B_{t} x_{t-1} = b_t, \,x_t \geq 0 \}.
$$
We make the following assumption:
\begin{itemize}
\item[(H1-D)] The set $X_1(x_{0})$ is nonempty and bounded and for every $x_1 \in X_1(x_{0})$,
for every $t=2,\ldots,T$, for every $x_2 \in X_2( x_1),\ldots,x_{t-1} \in X_{t-1}( x_{t-2})$,
the set $X_t( x_{t-1} )$ is nonempty and bounded.
\end{itemize}

In this section, we introduce a variant of DDP to solve \eqref{pborigdet} 
called Inexact DDP for linear programs (IDDP-LP) where the subproblems of the forward and backward passes
are solved approximately. At iteration $k$, for $t=2,\ldots,T$,
convex function  $\mathcal{Q}_t$ is approximated by a  
 piecewise affine lower bounding function $\mathcal{Q}_t^k$ 
which is a maximum of affine lower bounding functions $\mathcal{C}_{t}^i$ called inexact cuts:
$$
\mathcal{Q}_t^k(x_{t-1} ) = \max_{1 \leq i \leq k} \mathcal{C}_{t}^i( x_{t-1}  ) \mbox{ with }\mathcal{C}_{t}^i (x_{t-1})=\theta_{t}^i + \langle \beta_{t}^i , x_{t-1} \rangle
$$
where coefficients $\theta_{t}^i, \beta_{t}^i$ are computed as explained below. The steps of IDDP-LP are as follows:\\

\par {\textbf{IDDP-LP, Step 1: Initialization.}} For $t=2,\ldots,T$, take for $\mathcal{Q}_t^0$ 
a known lower bounding affine function for $\mathcal{Q}_t$.
Set the iteration count $k$ to 1 and $\mathcal{Q}_{T+1}^0 \equiv 0$.\\
\par {\textbf{IDDP-LP, Step 2: Forward pass.}} Using approximation $\mathcal{Q}_{t+1}^{k-1}$
of $\mathcal{Q}_{t+1}$  (computed at previous iterations), we compute a $\delta_t^k$-optimal basic feasible solution $x_t^k$ of the problem (it is in particular an extreme point of the feasible set)
\begin{equation}\label{pbforwardpassd}
\left\{
\begin{array}{l}
\min_{x_t \in \mathbb{R}^n} c_t^T x_t + \mathcal{Q}_{t+1}^{k-1} ( x_t )\\
x_t \in X_t(x_{t-1}^k )
\end{array}
\right.
\end{equation}
for $t=1,\ldots,T$,
where $x_0^k=x_0$.\\
\par {\textbf{IDDP-LP, Step 3: Backward pass.}} 
The backward pass builds inexact cuts for $\mathcal{Q}_t$ at trial points $x_{t-1}^k$ computed in the forward pass.
For $k \geq 1$ and $t=1,\ldots,T$, we introduce
the function ${\underline{\mathcal{Q}}}_t^k : \mathbb{R}^n  \rightarrow {\overline{\mathbb{R}}}$ given by
\begin{equation}\label{backwardt0d}
{\underline{\mathcal{Q}}}_t^k (x_{t-1}   ) =  
\left\{
\begin{array}{l}
\min_{x_t \in \mathbb{R}^n} c_t^T x_t + \mathcal{Q}_{t+1}^k ( x_t )\\
x_t \in X_t(x_{t-1}),
\end{array}
\right.
\end{equation}
where $\mathcal{Q}_{T+1}^k \equiv 0$. We solve approximately the problem
\begin{equation}\label{backwardTd}
\mathcal{Q}_T ( x_{T-1}^k  ) = 
\left\{ 
\begin{array}{l}
\displaystyle \min_{x_T \in \mathbb{R}^n} c_{T}^T x_T \\
A_{T } x_{T} + B_{T } x_{T-1}^k = b_{T},
x_T \geq 0,
\end{array}
\right.
\mbox{ with dual }
\left\{ 
\begin{array}{l}
\max_{\lambda} \lambda^T ( b_{T} - B_{T} x_{T-1}^k )\\
A_{T}^T \lambda \leq c_{T}.
\end{array}
\right.
\end{equation}
More precisely, let $\lambda_{T}^k$ be an $\varepsilon_T^k$-optimal basic feasible solution of the dual problem above (it is in particular an extreme point of the feasible set). We 
compute 
\begin{equation}\label{thetaTkbetaTk}
\theta_{T}^k= \langle   b_{T}, \lambda_{T}^k \rangle \mbox{ and }\beta_{T}^k = - B_{T}^T \lambda_{T}^k.
\end{equation}
Using Proposition \ref{inexactlp} we have that $\mathcal{C}_{T}^k (x_{T-1})=\theta_{T}^k + \langle \beta_{T}^k , x_{T-1} \rangle$ is an inexact cut for
$\mathcal{Q}_T$ at $x_{T-1}^k$
which satisfies 
\begin{equation}\label{qualitycutTd}
\mathcal{Q}_T( x_{T-1}^k ) - \mathcal{C}_T^k ( x_{T-1}^k ) \leq \varepsilon_T^k. 
\end{equation}

Then for $t=T-1$ down to $t=2$, knowing $\mathcal{Q}_{t+1}^k \leq \mathcal{Q}_{t+1}$, consider the optimization problem
\begin{equation}\label{backwardtd}
{\underline{\mathcal{Q}}}_t^k ( x_{t-1}^k ) = 
\left\{ 
\begin{array}{l}
\displaystyle \min_{x_t} c_{t}^T x_t + \mathcal{Q}_{t+1}^k ( x_t ) \\
x_t \in X_t( x_{t-1}^k  )
\end{array}
\right.
=
\left\{ 
\begin{array}{l}
\displaystyle \min_{x_t, f} c_{t}^T x_t + f \\
A_{t} x_{t} + B_{t} x_{t-1}^k = b_{t}, x_t \geq 0,\\
f \geq \theta_{t+1}^i + \langle \beta_{t+1}^i , x_t  \rangle, i=1,\ldots,k.
\end{array}
\right.
\end{equation}
Observe that due to (H1-D) the above problem is feasible and has a finite optimal value. Therefore ${\underline{\mathcal{Q}}}_t^k ( x_{t-1}^k  )$
can be expressed as the optimal value of the corresponding dual problem:
\begin{equation}\label{dualpbtbackd}
{\underline{\mathcal{Q}}}_t^k ( x_{t-1}^k  ) = 
\left\{
\begin{array}{l}
\displaystyle \max_{\lambda, \mu} \lambda^T( b_{t} - B_{t} x_{t-1}^k   ) + \sum_{i=1}^k \mu_{i} \theta_{t+1}^i  \\
A_{t}^T \lambda +\displaystyle  \sum_{i=1}^k \mu_{i} \beta_{t+1}^i \leq c_{t},\;\sum_{i=1}^k \mu_{i}=1,\\
\mu_{i} \geq 0,\,i=1,\ldots,k.
\end{array}
\right.
\end{equation}
Let $(\lambda_{t}^k, \mu_{t}^k )$ be an  $\varepsilon_t^k$-optimal basic feasible solution of dual problem \eqref{dualpbtbackd}.
We compute 
\begin{equation}\label{formulathetabetatkd}
\theta_{t}^k =\langle  \lambda_{t}^k ,  b_{t} \rangle +  \langle    \mu_{t}^k , \theta_{t+1, k} \rangle \mbox{ and }
\beta_{t}^k =- B_{t}^T \lambda_{t}^k,
\end{equation}
where vector $\theta_{t+1, k}$ has components $\theta_{t+1}^i, i=1,\ldots,k$,
arranged in the same order as components $\mu_{t}^k(i), i=1,\ldots,k$, of $\mu_{t}^k$. 
Recalling that $\mathcal{C}_t^k ( x_{t-1} ) = \theta_t^k + \langle \beta_t^k , x_{t-1} \rangle$ and using Proposition \ref{inexactlp}, we have 
\begin{equation}\label{cuttd}
{\underline{\mathcal{Q}}}_t^k ( x_{t-1} ) \geq \mathcal{C}_t^k ( x_{t-1} ) \;\;\mbox{ and }\;\;{\underline{\mathcal{Q}}}_t^k ( x_{t-1}^k ) - \mathcal{C}_t^k  ( x_{t-1}^k )    \leq \varepsilon_t^k.
\end{equation}
Using the fact that $\mathcal{Q}_{t+1}^k( x_{t-1} )  \leq \mathcal{Q}_{t+1}( x_{t-1} )$, we have 
${\underline{\mathcal{Q}}}_t^k(x_{t-1}) \leq \mathcal{Q}_t(x_{t-1})$,
and therefore 
\begin{equation}
\mathcal{Q}_t ( x_{t-1} ) \geq \mathcal{C}_t^k ( x_{t-1} ) 
\end{equation}
which shows that $\mathcal{C}_t^k$ is a cut for $\mathcal{Q}_t$.\\
\par {\textbf{IDDP-LP, Step 4:} Do $k \leftarrow k+1$ and go to Step 2.\\

Following the proof of Lemma 1 in \cite{philpot}, we obtain that
for all $t=2,\ldots,T+1$,
the collection of distinct values $(\theta_t^k, \beta_t^k)_k$
is finite and cut coefficients $(\theta_t^k, \beta_t^k)_k$
are uniformly bounded. Observe that this proof uses the fact that 
$(\lambda_{t}^k , \mu_{t}^k)$ are extreme points of the feasible set of 
\eqref{dualpbtbackd}. There could however be unbounded sequences 
of approximate optimal feasible solutions to \eqref{dualpbtbackd}.

\subsection{Convergence analysis}

In this section we state a convergence result for IDDP-LP in Theorem \ref{conviddplp} when noises $\delta_t^k, \varepsilon_t^k$
are bounded and in Theorem \ref{convidoasa} when these noises asymptotically vanish.

We will need the following simple extension of \cite[Lemma A.1]{lecphilgirar12}:
\begin{lemma}\label{limsuptechlemma} Let $X$ be a compact set, let 
$f: X \rightarrow \mathbb{R}$ be Lipschitz continuous, and suppose that the sequence of $L$-Lipschitz continuous functions 
$f^k, k \in \mathbb{N}$ satisfies $f^{k}(x) \leq f^{k+1}(x) \leq f(x) \;\mbox{for all }x \in X,\;k \in \mathbb{N}$.
Let $(x^k)_{k \in \mathbb{N}}$ be a sequence in $X$ and  assume that 
\begin{equation}\label{limsuphyp}
\varlimsup_{k \rightarrow +\infty} f( x^k ) -f^k( x^k ) \leq S 
\end{equation}
for some $S \geq 0$.
Then
\begin{equation}\label{limsuphypkm1}
\varlimsup_{k \rightarrow +\infty} f( x^k ) -f^{k-1}( x^k ) \leq S. 
\end{equation}
\end{lemma}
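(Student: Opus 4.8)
The plan is to reduce the statement to the uniform convergence of the monotone sequence $(f^k)$ and then to control the gap $f^k(x^k)-f^{k-1}(x^k)$ by a quantity that does not depend on the point $x^k$ and tends to $0$. The crucial observation is that, since the $f^k$ are equi-Lipschitz and monotone, their pointwise limit is continuous, which is exactly what is needed to invoke Dini's theorem and thereby upgrade pointwise convergence to uniform convergence.

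First I would define $f_\infty(x):=\lim_{k \to +\infty} f^k(x)$ for $x \in X$; this limit exists because, by hypothesis, $(f^k(x))_{k}$ is nondecreasing and bounded above by $f(x)$. Since every $f^k$ is $L$-Lipschitz, letting $k \to +\infty$ in $|f^k(x)-f^k(y)| \leq L\|x-y\|$ shows that $f_\infty$ is $L$-Lipschitz, hence continuous on $X$. Now $(f^k)_k$ is a nondecreasing sequence of continuous functions on the compact set $X$ converging pointwise to the continuous function $f_\infty$, so by Dini's theorem the convergence is uniform. I would therefore set $\eta_k := \sup_{x \in X}\big(f_\infty(x)-f^k(x)\big)$ and record that $\eta_k \geq 0$ and $\eta_k \to 0$ as $k \to +\infty$.

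It then remains to combine this with hypothesis \eqref{limsuphyp}. From $f^{k-1}(x^k) \leq f^k(x^k) \leq f_\infty(x^k)$ we get $0 \leq f^k(x^k)-f^{k-1}(x^k) \leq f_\infty(x^k)-f^{k-1}(x^k) \leq \eta_{k-1}$, whence
\[
f(x^k)-f^{k-1}(x^k) = \big(f(x^k)-f^k(x^k)\big) + \big(f^k(x^k)-f^{k-1}(x^k)\big) \leq \big(f(x^k)-f^k(x^k)\big) + \eta_{k-1}.
\]
Taking $\varlimsup_{k\to+\infty}$ and using $\eta_{k-1}\to 0$ together with \eqref{limsuphyp} then yields \eqref{limsuphypkm1}. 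I do not expect a serious obstacle here; the only point requiring care is the justification that $f_\infty$ is continuous (so that Dini applies), which is precisely where the common Lipschitz constant $L$ of the $f^k$ is used. One could alternatively bypass Dini and obtain the uniform bound $\eta_k \to 0$ directly from equi-Lipschitzness and compactness via a finite-net argument, but Dini's theorem makes this step cleanest.
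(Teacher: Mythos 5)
Your proof is correct, but it takes a genuinely different route from the paper's. The paper argues by contradiction: assuming \eqref{limsuphypkm1} fails, it extracts a subsequence $\sigma(k)$ along which $f(x^{\sigma(k)})-f^{\sigma(k)-1}(x^{\sigma(k)})>S+\varepsilon_0$, uses compactness to pass to a cluster point $x_*$ and the common Lipschitz constant $L$ to control $f^{\sigma(k)-1}$ and $f^{\sigma(k)}$ near $x_*$, and then shows that $f^{\sigma(k)}(x_*)$ must increase by at least $\varepsilon_0/4$ at every step, contradicting the bound $f^{\sigma(k)}(x_*)\leq f(x_*)$. You instead identify the underlying mechanism directly: the monotone, equi-Lipschitz sequence $(f^k)$ converges pointwise to an $L$-Lipschitz limit $f_\infty$, Dini's theorem (or a finite-net argument, as you note) upgrades this to uniform convergence, and the uniform deficit $\eta_{k-1}=\sup_X(f_\infty-f^{k-1})\to 0$ dominates the gap $f^k(x^k)-f^{k-1}(x^k)$ independently of where $x^k$ sits, after which \eqref{limsuphypkm1} follows from \eqref{limsuphyp} by subadditivity of $\varlimsup$. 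Your version is cleaner and slightly stronger in that it yields the pointwise estimate $f(x^k)-f^{k-1}(x^k)\leq \big(f(x^k)-f^k(x^k)\big)+\eta_{k-1}$, cleanly separating the two error sources, and it makes transparent exactly where equi-Lipschitzness enters (continuity of $f_\infty$, so Dini applies); what the paper's more pedestrian contradiction argument buys is self-containedness --- it invokes no named theorem and stays entirely at the level of sequences, mirroring the lemma in the cited reference \cite{lecphilgirar12} that it extends. Neither proof actually uses the Lipschitz continuity of $f$ itself, only that of the $f^k$.
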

\begin{proof} Let us show \eqref{limsuphypkm1} by contradiction.
Assume that  \eqref{limsuphypkm1} does not hold. Then there exist $\varepsilon_0>0$ and $\sigma: \mathbb{N} \rightarrow \mathbb{N}$
increasing such that for every $k \in \mathbb{N}$ we have
\begin{equation}\label{firstlemma}
f( x^{\sigma(k)} ) -f^{\sigma(k)-1}( x^{\sigma(k)} ) > S + \varepsilon_0.
\end{equation}
Since $x^{\sigma(k)}$ is a sequence of the compact set $X$, it has some convergent subsequence which converges to some $x_* \in X$.
Taking into account \eqref{limsuphyp} and the fact that $f^k$ are $L$-Lipschitz continuous, we can take $\sigma$
such that \eqref{firstlemma} holds and
\begin{eqnarray}
f( x^{\sigma(k)} ) - f^{\sigma(k)}( x^{\sigma(k)} ) &\leq& S + \frac{\varepsilon_0}{4},\label{seclemma}\\
f^{\sigma(k)-1}( x^{\sigma(k)} ) - f^{\sigma(k)-1}( x_* )& >& -\frac{\varepsilon_0}{4},\label{thlemma}\\
f^{\sigma(k)}( x_* ) - f^{\sigma(k)}( x^{\sigma(k)} ) &>&  -\frac{\varepsilon_0}{4}.\label{fourlemma}
\end{eqnarray}
Therefore for every $k \geq 1$ we get
$$
\begin{array}{llll}
f^{\sigma(k)}( x_* ) - f^{\sigma(k-1)}( x_* ) & \geq & f^{\sigma(k)}( x_* ) - f^{\sigma(k)-1}( x_* )&\mbox{ since }\sigma(k) \geq \sigma(k-1) + 1,\\
& = & f^{\sigma(k)}( x_* ) - f^{\sigma(k)}( x^{\sigma(k)} )&(>-\varepsilon_0/4\mbox{ by }\eqref{fourlemma}),\\
 & &  + f^{\sigma(k)}( x^{\sigma(k)} ) -   f( x^{\sigma(k)} )&(\geq-S-\varepsilon_0/4\mbox{ by }\eqref{seclemma}),\\
 && + f( x^{\sigma(k)} )  -  f^{\sigma(k)-1}( x^{\sigma(k)} )&(>S+\varepsilon_0\mbox{ by }\eqref{firstlemma}),\\
 &  &+  f^{\sigma(k)-1}( x^{\sigma(k)} )    - f^{\sigma(k)-1}( x_* )&(>-\varepsilon_0/4\mbox{ by }\eqref{thlemma}),\\
 & > & \varepsilon_0/4,&
\end{array}
$$
which implies $f^{\sigma(k)}( x_* ) > f^{\sigma(0)}( x_* ) + k \frac{\varepsilon_0}{4}$. This is in contradiction with the fact that the sequence $f^{\sigma(k)}( x_* )$ is bounded from above by $f(x_*)$.\hfill
\end{proof}

\begin{thm}[Convergence of IDDP-LP with bounded noises] \label{conviddplp}
Consider the sequences of decisions $(x_t^k)$ and of functions $(\mathcal{Q}_t^k)$ generated by IDDP-LP.
Assume that (H1-D) holds and that noises $\varepsilon_t^k$ and $\delta_t^k$ are bounded: $0 \leq \varepsilon_t^k \leq {\bar \varepsilon}$,
$0 \leq \delta_t^k \leq {\bar \delta}$ for some ${\bar \delta}, {\bar \varepsilon} \geq 0$.
\begin{itemize}
\item[(i)] Then  for $t=2,\ldots,T+1$,
\begin{equation}\label{lowerbound}
0 \leq \varliminf_{k \rightarrow +\infty} \mathcal{Q}_t( x_{t-1}^k ) - \mathcal{Q}_t^k( x_{t-1}^k ) \leq  
\varlimsup_{k \rightarrow +\infty} \mathcal{Q}_t( x_{t-1}^k ) - \mathcal{Q}_t^k( x_{t-1}^k ) \leq ({\bar \delta}  +  {\bar{\varepsilon}})(T-t+1).
\end{equation}
\item[(ii)] 
The limit superior and limit inferior of the sequence of upper bounds  $(\sum_{t=1}^T c_t^T x_t^k )_k$ on the optimal value 
$\mathcal{Q}_1( x_0 )$ of \eqref{pborigdet} satisfy
\begin{equation}\label{uppbound}
\mathcal{Q}_1( x_ 0 ) \leq 
\varliminf_{k \rightarrow +\infty}  \sum_{t=1}^T c_t^T x_t^k  \leq \varlimsup_{k \rightarrow +\infty}  \sum_{t=1}^T c_t^T x_t^k  \leq  \mathcal{Q}_1(x_0) + 
({\bar \delta} + {\bar{\varepsilon}}) \frac{T(T+1)}{2}.
\end{equation}
\item[(iii)]
The limit superior and limit inferior of the sequence of lower bounds  $({\underline{\mathcal{Q}}}_1^{k}( x_{0} ) )_k$ on the optimal value 
$\mathcal{Q}_1( x_0 )$ of \eqref{pborigdet} satisfy
\begin{equation}\label{lbound}
\mathcal{Q}_1( x_{0} )- {\bar{\delta}} T   - {\bar{\varepsilon}} (T-1) \leq \varliminf_{k \rightarrow +\infty} {\underline{\mathcal{Q}}}_1^{k}( x_{0} ) \leq  
\varlimsup_{k \rightarrow +\infty} {\underline{\mathcal{Q}}}_1^{k}( x_{0} ) \leq \mathcal{Q}_1( x_{0} ) .
\end{equation}
\end{itemize}
\end{thm}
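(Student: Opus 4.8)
The plan is to prove the three items in order, with (i) serving as the engine for (ii) and (iii), and to lean throughout on Lemma \ref{limsuptechlemma} to pass between the $k$-th and $(k-1)$-th approximations. First I would record the structural facts needed to invoke that lemma. Each $\mathcal{Q}_t^k$ is a maximum of the cuts $\mathcal{C}_t^i$, $i\le k$, so $\mathcal{Q}_t^k\le\mathcal{Q}_t^{k+1}$; the end of the backward pass already gives $\mathcal{Q}_t^k\le\mathcal{Q}_t$; under (H1-D) each $\mathcal{Q}_t$ is a finite convex polyhedral function, hence Lipschitz on the compact set $X$ that contains every admissible trajectory, and all iterates $x_t^k$ lie in $X$. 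The one point requiring care is uniform Lipschitz continuity of the family $\{\mathcal{Q}_t^k\}_k$: since the cut gradients are $\beta_t^i=-B_t^\transp\lambda_t^i$ with $\lambda_t^i$ an $\varepsilon$-optimal dual solution, I need the dual solutions to lie in a fixed bounded set, which I would extract from (H1-D) (boundedness of the primal feasible sets confining the relevant multipliers). This yields a common constant $L$ for which all $\mathcal{Q}_t^k$ are $L$-Lipschitz.

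For (i), the inequality $0\le\varliminf$ is immediate from $\mathcal{Q}_t^k\le\mathcal{Q}_t$. The upper inequality I would prove by backward induction on $t$, from $t=T+1$ (trivial, since $\mathcal{Q}_{T+1}=\mathcal{Q}_{T+1}^k\equiv0$) down to $t=2$. The decisive step is to re-express the stage-$t$ gap at the forward point of the next stage. Using $\mathcal{Q}_t^k\ge\mathcal{C}_t^k$ and the cut estimate \eqref{cuttd}, I get $\mathcal{Q}_t(x_{t-1}^k)-\mathcal{Q}_t^k(x_{t-1}^k)\le[\mathcal{Q}_t(x_{t-1}^k)-\underline{\mathcal{Q}}_t^k(x_{t-1}^k)]+\varepsilon_t^k$. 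Then, since $x_t^k$ is feasible for $\mathcal{Q}_t(x_{t-1}^k)$ and is a $\delta_t^k$-optimal solution of the forward problem \eqref{pbforwardpassd} (whose value is $\underline{\mathcal{Q}}_t^{k-1}(x_{t-1}^k)\le\underline{\mathcal{Q}}_t^k(x_{t-1}^k)$ by monotonicity of the approximations), I obtain the central estimate
\[
\mathcal{Q}_t(x_{t-1}^k)-\underline{\mathcal{Q}}_t^k(x_{t-1}^k)\ \le\ \mathcal{Q}_{t+1}(x_t^k)-\mathcal{Q}_{t+1}^{k-1}(x_t^k)+\delta_t^k .
\]
Taking $\varlimsup_k$, the inductive hypothesis for $t+1$ gives $\varlimsup_k[\mathcal{Q}_{t+1}(x_t^k)-\mathcal{Q}_{t+1}^k(x_t^k)]\le(\bar\delta+\bar\varepsilon)(T-t)$, and Lemma \ref{limsuptechlemma} (with $f=\mathcal{Q}_{t+1}$, $f^k=\mathcal{Q}_{t+1}^k$, $x^k=x_t^k$) upgrades this to the same bound with $\mathcal{Q}_{t+1}^{k-1}$ in place of $\mathcal{Q}_{t+1}^k$. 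Adding $\bar\delta+\bar\varepsilon$ produces $(\bar\delta+\bar\varepsilon)(T-t+1)$, closing the induction and proving \eqref{lowerbound}.

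For (ii), the left inequality of \eqref{uppbound} holds because $(x_1^k,\dots,x_T^k)$ is a feasible trajectory of \eqref{pborigdet}, so its cost is at least $\mathcal{Q}_1(x_0)$. For the right inequality I would telescope the realized cost. Setting $G_t^k:=\sum_{s=t}^T c_s^\transp x_s^k-\mathcal{Q}_t(x_{t-1}^k)$ and using the $\delta_t^k$-optimality of $x_t^k$ together with $\mathcal{Q}_t(x_{t-1}^k)\le c_t^\transp x_t^k+\mathcal{Q}_{t+1}(x_t^k)$, I obtain the recursion
\[
G_t^k\ \le\ G_{t+1}^k+\big[\mathcal{Q}_{t+1}(x_t^k)-\mathcal{Q}_{t+1}^{k-1}(x_t^k)\big]+\delta_t^k,\qquad G_{T+1}^k\equiv0 .
\]
Passing to $\varlimsup_k$ and invoking (i) together with Lemma \ref{limsuptechlemma} term by term bounds $\varlimsup_k G_t^k$ by $\varlimsup_k G_{t+1}^k+(\bar\delta+\bar\varepsilon)(T-t)+\bar\delta$; unrolling from $t=1$ and using $\bar\delta\le\bar\delta+\bar\varepsilon$ gives $\varlimsup_k G_1^k\le(\bar\delta+\bar\varepsilon)\big[\tfrac{T(T-1)}{2}+T\big]=(\bar\delta+\bar\varepsilon)\tfrac{T(T+1)}{2}$, which is \eqref{uppbound}.

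For (iii), the right inequality of \eqref{lbound} is immediate since $\mathcal{Q}_2^k\le\mathcal{Q}_2$ forces $\underline{\mathcal{Q}}_1^k(x_0)\le\mathcal{Q}_1(x_0)$. For the left inequality I would use that the lower bounds are nondecreasing ($\underline{\mathcal{Q}}_1^k(x_0)\ge\underline{\mathcal{Q}}_1^{k-1}(x_0)$ because $\mathcal{Q}_2^k\ge\mathcal{Q}_2^{k-1}$) and that $x_1^k$ is a $\delta_1^k$-optimal solution of the stage-$1$ forward problem, whose value is $\underline{\mathcal{Q}}_1^{k-1}(x_0)$; this gives $\underline{\mathcal{Q}}_1^k(x_0)\ge c_1^\transp x_1^k+\mathcal{Q}_2^{k-1}(x_1^k)-\delta_1^k$. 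Combined with $\mathcal{Q}_1(x_0)\le c_1^\transp x_1^k+\mathcal{Q}_2(x_1^k)$ it yields $\mathcal{Q}_1(x_0)-\underline{\mathcal{Q}}_1^k(x_0)\le\mathcal{Q}_2(x_1^k)-\mathcal{Q}_2^{k-1}(x_1^k)+\delta_1^k$; taking $\varlimsup_k$ and using (i) for $t=2$ with Lemma \ref{limsuptechlemma} bounds this by $(\bar\delta+\bar\varepsilon)(T-1)+\bar\delta=\bar\delta T+\bar\varepsilon(T-1)$, which is \eqref{lbound}. The main obstacle I anticipate is twofold: securing the uniform Lipschitz bound for $\{\mathcal{Q}_t^k\}$ (equivalently, uniform boundedness of the $\varepsilon$-optimal dual multipliers), without which Lemma \ref{limsuptechlemma} is unavailable; and the recurring device of evaluating each stage-$t$ gap at the \emph{forward} point $x_t^k$ against the \emph{previous} approximation $\mathcal{Q}_{t+1}^{k-1}$, since a naive evaluation at the backward minimizer of \eqref{backwardt0d} does not connect to the inductive hypothesis.
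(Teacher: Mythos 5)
Your proposal is correct and follows essentially the same route as the paper's proof: the same chain of inequalities (the cut estimate \eqref{cuttd}, monotonicity ${\underline{\mathcal{Q}}}_t^{k-1}\leq{\underline{\mathcal{Q}}}_t^{k}$, $\delta_t^k$-optimality of $x_t^k$, and feasibility of $x_t^k$ for the problem defining $\mathcal{Q}_t$) drives the backward induction in (i) with Lemma \ref{limsuptechlemma} shifting $\mathcal{Q}_{t+1}^{k}$ to $\mathcal{Q}_{t+1}^{k-1}$, your telescoped recursion for $G_t^k$ in (ii) is merely a repackaging of the paper's summed estimate \eqref{eqii}, and your argument for (iii) coincides with \eqref{finallemmalpbounded}. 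The uniform-Lipschitz concern you flag (boundedness of the $\varepsilon$-optimal dual multipliers, needed to apply Lemma \ref{limsuptechlemma}) is legitimate, but the paper's proof invokes the same hypothesis silently, so it does not separate your argument from the paper's.
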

\begin{proof} We show (i) by backward induction on $t$. Relation \eqref{lowerbound} holds for $t=T+1$.
Now assume that 
\begin{equation}\label{indhyp}
0 \leq \varliminf_{k \rightarrow +\infty} \mathcal{Q}_{t+1}( x_{t}^k ) - \mathcal{Q}_{t+1}^k( x_{t}^k ) \leq  
\varlimsup_{k \rightarrow +\infty} \mathcal{Q}_{t+1}( x_{t}^k ) - \mathcal{Q}_{t+1}^k( x_{t}^k ) \leq ( {\bar{\varepsilon}} + {\bar{\delta}} )  (T-t). 
\end{equation}
for some $t \in \{2,\ldots,T\}$.
We have
\begin{equation}\label{firstddp}
\begin{array}{lll}
0 & \leq & \mathcal{Q}_t( x_{t-1}^k ) - \mathcal{Q}_t^k( x_{t-1}^k ) \leq \mathcal{Q}_t( x_{t-1}^k ) - \mathcal{C}_t^k( x_{t-1}^k ) \mbox{ since }\mathcal{Q}_t^k \geq \mathcal{C}_t^k,\\
& \leq &  {\bar{\varepsilon}} +  \mathcal{Q}_t( x_{t-1}^k ) - {\underline{\mathcal{Q}}}_t^k( x_{t-1}^k )\mbox{ using }\eqref{cuttd}\mbox{ and }\varepsilon_t^k \leq {\bar \varepsilon},\\
& \leq &  {\bar{\varepsilon}} +  \mathcal{Q}_t( x_{t-1}^k ) - {\underline{\mathcal{Q}}}_t^{k-1}( x_{t-1}^k )\mbox{ by monotonicity},\\
& \leq &  {\bar{\varepsilon}} + {\bar{\delta}} +  \mathcal{Q}_t( x_{t-1}^k ) - c_t^T x_t^k  -  \mathcal{Q}_{t+1}^{k-1}( x_{t}^k )\mbox{ by definition of }x_t^k,\\
& = &  {\bar{\varepsilon}} + {\bar{\delta}} +  \underbrace{\mathcal{Q}_t( x_{t-1}^k ) - c_t^T x_t^k  - \mathcal{Q}_{t+1}(x_t^k)}_{\leq 0\mbox{ by definition of }\mathcal{Q}_t}+ \mathcal{Q}_{t+1}(x_t^k) -  \mathcal{Q}_{t+1}^{k-1}( x_{t}^k ),\\
& \leq & {\bar{\varepsilon}} + {\bar{\delta}} + \mathcal{Q}_{t+1}(x_t^k) -  \mathcal{Q}_{t+1}^{k-1}( x_{t}^k ).
\end{array}
\end{equation}
Using \eqref{indhyp} and applying Lemma \ref{limsuptechlemma} to $x^k=x_t^k, f^k=\mathcal{Q}_{t+1}^k, f=\mathcal{Q}_{t+1}$, we obtain 
\begin{equation}\label{secddpi}
\varlimsup_{k \rightarrow +\infty}  \mathcal{Q}_{t+1}(x_t^k) -  \mathcal{Q}_{t+1}^{k-1}( x_{t}^k ) \leq ({\bar \varepsilon} + {\bar \delta})(T-t).
\end{equation}
Combining \eqref{firstddp} and \eqref{secddpi} we obtain \eqref{lowerbound} which achieves the proof of (i).

Since $(x_1^k,x_2^k,\ldots,x_T^k)$ is feasible for \eqref{pborigdet} we have $\mathcal{Q}_1( x_0 ) \leq \sum_{t=1}^T c_t^T  x_t^k $.
Using \eqref{firstddp} we deduce
\begin{equation}\label{eqii}
\begin{array}{lll}
\mathcal{Q}_1 ( x_0 )  \leq \displaystyle \sum_{t=1}^T c_t^T x_t^k & \leq  & ({\bar \varepsilon} +{\bar \delta})T + \displaystyle \sum_{t=1}^T \mathcal{Q}_t ( x_{t-1}^k ) - \mathcal{Q}_{t+1}^{k-1}( x_t^k ) \\
& =  & ({\bar \varepsilon} +{\bar \delta})T + \displaystyle \sum_{t=1}^T \Big[ \mathcal{Q}_t ( x_{t-1}^k ) - \mathcal{Q}_{t+1}(x_t^k )\Big] + \displaystyle \sum_{t=1}^T \Big[\mathcal{Q}_{t+1}(x_t^k ) - \mathcal{Q}_{t+1}^{k-1}( x_t^k )\Big] \\
& =  & ({\bar \varepsilon} +{\bar \delta})T + \mathcal{Q}_1 ( x_0 ) + \displaystyle \sum_{t=1}^T \Big[ \mathcal{Q}_{t+1}(x_t^k ) - \mathcal{Q}_{t+1}^{k-1}( x_t^k )\Big].
\end{array}
\end{equation}
Recalling that relation \eqref{secddpi} holds for $t=1,\ldots,T$, and passing to the limit in \eqref{eqii}, we obtain (ii).

Finally,
\begin{equation}\label{finallemmalpbounded}
\begin{array}{lll}
0  \leq   \mathcal{Q}_1 ( x_0 ) - {\underline{\mathcal{Q}}}_1^k ( x_0 ) & \leq & \mathcal{Q}_1 ( x_0 ) - {\underline{\mathcal{Q}}}_1^{k-1} ( x_0 )\\
&\leq & 
{\bar \delta} + \mathcal{Q}_1 ( x_0 ) - c_1^T x_1^k - \mathcal{Q}_2^{k-1} ( x_1^k ) \\
& \leq & {\bar \delta} + \mathcal{Q}_2( x_1^k ) - \mathcal{Q}_2^{k-1} ( x_1^k ).
\end{array}
\end{equation}
Using \eqref{finallemmalpbounded} and relation \eqref{secddpi} with $t=1$, we obtain (iii).
\hfill 
\end{proof}

When all subproblems are solved exactly, i.e., when $\bar \varepsilon = \bar \delta=0$, Theorem \ref{conviddplp} 
shows that the sequences of upper bounds $\sum_{t=1}^T c_t^T x_t^k$ and of lower bounds ${\underline{\mathcal{Q}}}_1^k ( x_0 )$ converge to the optimal value of \eqref{pborigdet}
and that any accumultation point of the sequence $(x_1^k,\ldots,x_T^k)$ is an optimal solution of \eqref{pborigdet}.
Therefore, in this situation, IDDP-LP can stop when
$\sum_{t=1}^T c_t^T x_t^k - {\underline{\mathcal{Q}}}_1^k ( x_{0}  ) \leq  \mbox{Tol}$ for some parameter Tol$>0$, in which case, a Tol-optimal solution
to \eqref{pborigdet} has been found.

More generally, when noises are vanishing, i.e., when $\lim_{k \rightarrow +\infty} \varepsilon_t^k = \lim_{k \rightarrow +\infty} \delta_t^k =0$, we
can show that IDDP-LP solves \eqref{pborigdet} in a finite number of iterations:

\begin{thm}[Convergence of IDDP-LP with asymptotically vanishing noises] \label{convidoasa}
Consider the sequence of decisions $(x_1^k,\ldots$, $x_T^k)_k$ computed along the iterations of IDDP-LP.
Let Assumption (H1-D) hold. Assume that all subproblems in the forward and backward passes of IDDP-LP are solved using an algorithm
that necessarily outputs an extremal point of the feasible set, for instance the simplex algorithm.
If $\lim_{k \rightarrow +\infty} \varepsilon_t^k = \lim_{k \rightarrow +\infty} \delta_t^k =0$ for all $t$, then
there is $k_0 \in \mathbb{N}$ such that for every $k \geq k_0$, 
$(x_1^{k},x_2^{k},\ldots,x_T^{k})$
is an optimal solution of \eqref{pborigdet}.
\end{thm}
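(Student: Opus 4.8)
The plan is to combine the asymptotic estimates already contained in Theorem \ref{conviddplp} with a finiteness argument furnished by the extremal-point hypothesis: several nonnegative quantities that I will show converge to $0$ along the iterations will also be seen to range over a finite set, so they must equal $0$ exactly after finitely many steps. First I would upgrade Theorem \ref{conviddplp}(i) to the vanishing-noise setting. Fix $\eta>0$ and let $K$ be such that $\varepsilon_t^k,\delta_t^k\le\eta$ for all $k\ge K$ and all $t$. Since each $\mathcal{Q}_t^{K-1}$ is a valid affine-maximum lower bound for $\mathcal{Q}_t$, the tail run (iterations $\ge K$) is itself an instance of IDDP-LP with noise bounds ${\bar\delta}={\bar\varepsilon}=\eta$, and the limsup/liminf estimates of Theorem \ref{conviddplp} are tail quantities insensitive to the first $K-1$ iterations. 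Hence $\varlimsup_k\,[\mathcal{Q}_t(x_{t-1}^k)-\mathcal{Q}_t^k(x_{t-1}^k)]\le 2\eta(T-t+1)$ for every $\eta>0$, so this limsup equals $0$; applying Lemma \ref{limsuptechlemma} to $f^k=\mathcal{Q}_{t+1}^k$, $f=\mathcal{Q}_{t+1}$ then gives $\lim_k\,[\mathcal{Q}_{t+1}(x_t^k)-\mathcal{Q}_{t+1}^{k-1}(x_t^k)]=0$ for each $t$.

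The second and central step is finiteness, where the extremal-point hypothesis is essential. I would show by backward induction on $t$ that only finitely many distinct cuts $\mathcal{C}_t^k$ can ever be produced. For $t=T$ the dual in \eqref{backwardTd} is the fixed polyhedron $\{\lambda: A_T^T\lambda\le c_T\}$, so an extremal $\lambda_T^k$ is one of its finitely many vertices and \eqref{thetaTkbetaTk} yields finitely many cuts. Assuming a finite pool of distinct cuts at stage $t+1$, the function $\mathcal{Q}_{t+1}^k$ is a maximum over a subset of this finite pool, hence takes finitely many forms; crucially, the dual feasible set \eqref{dualpbtbackd} does not involve the trial point $x_{t-1}^k$, so after identifying repeated cuts it reduces to one of finitely many fixed polyhedra, and an extremal $(\lambda_t^k,\mu_t^k)$ produces through \eqref{formulathetabetatkd} one of finitely many cut coefficients. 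Consequently each $\mathcal{Q}_t^k$ lies in a finite set of functions. A parallel forward induction on $t$ then shows the trial points lie in a finite set $V_t$: the feasible set of the forward problem \eqref{pbforwardpassd} depends only on $x_{t-1}^k\in V_{t-1}$ (through the right-hand side) and on $\mathcal{Q}_{t+1}^{k-1}$ (through the cut constraints), both ranging over finite sets, so its vertices, one of which is the extremal output $x_t^k$, form a finite pool.

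To pass from asymptotic to exact optimality, I would combine $\delta_t^k$-optimality of $x_t^k$ for \eqref{pbforwardpassd} (whose optimal value is ${\underline{\mathcal{Q}}}_t^{k-1}(x_{t-1}^k)$ by \eqref{backwardt0d}), the inequality ${\underline{\mathcal{Q}}}_t^{k-1}(x_{t-1}^k)\le\mathcal{Q}_t(x_{t-1}^k)$ coming from $\mathcal{Q}_{t+1}^{k-1}\le\mathcal{Q}_{t+1}$, and feasibility of $x_t^k$ for the true stage-$t$ problem, to obtain $0\le c_t^T x_t^k+\mathcal{Q}_{t+1}(x_t^k)-\mathcal{Q}_t(x_{t-1}^k)\le\delta_t^k+[\mathcal{Q}_{t+1}(x_t^k)-\mathcal{Q}_{t+1}^{k-1}(x_t^k)]$, whose right-hand side tends to $0$ by the first step. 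The middle quantity is $g_t(x_{t-1}^k,x_t^k)$ with $g_t(v,w):=c_t^T w+\mathcal{Q}_{t+1}(w)-\mathcal{Q}_t(v)$ involving only the true value functions; since $(x_{t-1}^k,x_t^k)\in V_{t-1}\times V_t$ with $V_{t-1},V_t$ finite, $g_t(x_{t-1}^k,x_t^k)$ takes finitely many nonnegative values while converging to $0$, hence equals $0$ for all $k$ beyond a stage-dependent threshold. Choosing $k_0$ larger than all $T$ thresholds, for $k\ge k_0$ we get $c_t^T x_t^k+\mathcal{Q}_{t+1}(x_t^k)=\mathcal{Q}_t(x_{t-1}^k)$ for every $t$; telescoping from $t=1$ with $x_0^k=x_0$ and $\mathcal{Q}_{T+1}\equiv 0$ gives $\sum_{t=1}^T c_t^T x_t^k=\mathcal{Q}_1(x_0)$, so the feasible trajectory $(x_1^k,\ldots,x_T^k)$ is optimal for \eqref{pborigdet}.

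The hard part will be the finiteness of the cut pool in the second step. The difficulty is that the dual \eqref{dualpbtbackd} gains a new multiplier $\mu_i$ at each iteration, so its dimension grows without bound; one must verify that, after merging the repeated cuts generated at stage $t+1$, the coefficients \eqref{formulathetabetatkd} depend on the extremal dual solution only through its image in a fixed, lower-dimensional reduced polyhedron with finitely many vertices. Making this reduction rigorous—in particular checking that an extremal point of the ever-growing dual maps to one of finitely many admissible cut coefficients—is the delicate combinatorial point on which finite termination rests, and is precisely what the assumption that subproblems are solved by an algorithm returning an extremal point (the simplex method) is designed to enable.
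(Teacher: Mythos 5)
Your proposal is correct, and it shares the paper's central idea — the extremal-point hypothesis plus (H1-D) forces the trial points and the cut coefficients into finite pools — but your endgame is genuinely different from the paper's. The paper argues that, because only finitely many distinct subproblems can arise, every subproblem solved after some iteration $k_1$ is a copy of an earlier one; since $\varepsilon_t^k,\delta_t^k\to 0$ and each LP has finitely many vertices, all subproblems are eventually solved \emph{exactly}, the functions $\mathcal{Q}_t^k$ stabilize, and the conclusion is then delegated to the convergence theorem for exact DDP (Theorem 6.1 in \cite{guiguesejor2017}). You instead make the argument self-contained and quantitative: you re-run Theorem \ref{conviddplp} on the tail of the iterations with noise bound $\eta$ arbitrary (legitimate, since its proof never uses that the initialization is a single affine function, and the uniform Lipschitz constant needed in Lemma \ref{limsuptechlemma} is guaranteed here by the finite cut pool), conclude $\mathcal{Q}_{t+1}(x_t^k)-\mathcal{Q}_{t+1}^{k-1}(x_t^k)\to 0$, and then exploit finiteness to show that the nonnegative quantity $c_t^Tx_t^k+\mathcal{Q}_{t+1}(x_t^k)-\mathcal{Q}_t(x_{t-1}^k)$, which takes finitely many values and tends to $0$, must vanish identically for large $k$; telescoping yields optimality directly. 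What your route buys is independence from the external exact-DDP theorem and from the claim that subproblems are eventually solved exactly; what the paper's route buys is brevity. Two further points in your favor: you correctly observe that $x_t^k$ is a vertex of the \emph{lifted} polyhedron in \eqref{pbforwardpassd} (depending on the current cut collection), not of $X_t(x_{t-1}^k)$ itself — the paper's phrase ``$x_1^k$ is an extremal point of $X_1(x_0)$'' is loose on exactly this point — and you isolate the growing-dimension issue in the dual \eqref{dualpbtbackd}, which the paper glosses over. The ``delicate combinatorial point'' you defer does go through by the reduction you sketch: at a vertex of the high-dimensional dual, for each group of duplicate columns $\beta_{t+1}^i$ at most one multiplier $\mu_i$ can be positive (otherwise transferring mass within the group gives a feasible line through the point), so the vertex maps injectively-on-support to a vertex of one of finitely many reduced polyhedra built from the distinct cut gradients, and \eqref{formulathetabetatkd} then produces coefficients from a finite set since the attached $\theta_{t+1}^i$ also come from the finite pool; also note, as you do, that the dual feasible set is independent of $x_{t-1}^k$, so the backward induction on cut pools is free of circularity with the forward induction on trial points.
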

\begin{proof} Recalling that $x_1^k$ is an extremal point of $X_1(x_0)$ and that 
Assumption (H1-D) holds, IDDP-LP can only generate a finite number of different 
$x_1^k$. For each such $x_1^k$, $X_2(x_1^k)$ has a finite number of extremal points
and $x_2^k$ is one of these points. Therefore IDDP-LP can only generate a finite
number of different $x_2^k$. By induction, the number of different
trial points $x_1^k, x_2^k,\ldots, x_T^k$ is finite.
Similarly, only a finite number of different functions $\mathcal{Q}_T^k$ can be generated
(because the cut coefficients for $\mathcal{Q}_T$ are extremal points of a bounded polyhedron).
For each of these functions, a finite number of different functions $\mathcal{Q}_{T-1}^k$
can be computed.
Indeed, the number of different trial points $x_{T-1}^k$ is finite and
the  cut coefficients $\lambda_{T-1}^k$ for $\mathcal{Q}_{T-1}$ are extremal points of a bounded polyhedron.
Therefore we get a finite number of cuts ${\underline{\mathcal{Q}}}_{T-1}^{k}(x_{T-1}^k ) + \langle  \lambda_{T-1}^k , x_{T-1} - x_{T-1}^k \rangle$.
By induction, only a finite number of different functions $\mathcal{Q}_t^k, t=2,\ldots,T$, can be generated.
Therefore, after some iteration $k_1$, every optimization subproblem solved in the forward and backward passes is a copy of 
an optimization problem solved previously. It follows that after some iteration $k_0$ all subproblems are solved exactly (optimal solutions
are computed for all subproblems) and functions $\mathcal{Q}_t^k$ do not change anymore.
Consequently, from iteration $k_0$ on, to achieve the proof, we can apply the arguments of the proof of convergence of (exact) DDP (see Theorem 6.1 in \cite{guiguesejor2017}). \hfill
\end{proof}

\begin{rem}\label{remchoiceps} [Choice of parameters $\delta_t^k$ and $\varepsilon_t^k$] 
Recalling our convergence analysis and what motivates IDDP-LP, it makes sense to choose for 
$\delta_t^k$ and $\varepsilon_t^k$ sequences which decrease with $k$ and which, for fixed $k$,
decrease with $t$. A simple rule consists in defining relative errors, as long as a solver
handling such errors is used to solve the problems of the forward and backward passes.
Let the relative error for step $
t$ and iteration $k$ be ${\tt{Rel}}\_{\tt{Err}}_t^k$. 
We should take $\delta_1^k$ negligible (for instance $10^{-12}$) to compute a valid lower
bound in the first stage of the forward pass.
We propose to use  the relative error
\begin{equation}\label{rel_error}
{\tt{Rel}}\_{\tt{Err}}_t^k = \frac{1}{k}\Big[ \overline{\varepsilon} - \left(\frac{\overline{\varepsilon} - \varepsilon_0}{T-2} \right) (t-2) \Big],
\end{equation}
for step $t \geq 2$ and iteration $k \geq 1$ (in both the forward and backward passes), which induces corresponding $\delta_t^k$ and $\varepsilon_t^k$ for $t \geq 2, k \geq 1$.
With this choice, for fixed $k$, the relative error linearly decreases with $t$: it is maximal for $t=2$ (equal to some parameter $0<\overline{\varepsilon}/k<1$,
with for instance $\overline{\varepsilon}=0.1$) and minimal for $t=T$ (equal to some parameter $0<\varepsilon_0 /k<1$,
with for instance $\varepsilon_0=10^{-12}$). 

If one wishes to provide absolute errors instead of relative errors to the solvers,
we need a guess on the optimal values of the linear programs solved in the backward and forward passes. In this situation, 
we propose to
take as an estimation of
${\underline{\mathcal{Q}}}_t^k (x_{t-1}^k   )$ the value ${\underline{\mathcal{Q}}}_t^{k-1} (x_{t-1}^k   )$
which is available before solving \eqref{dualpbtbackd} since it was computed in the  forward pass of iteration $k$.
We take all $\delta_t^k$ negligible and define the absolute errors
\begin{equation} \label{abserrorddp}
\varepsilon_t^k = \max\Big(1, \left|{\underline{\mathcal{Q}}}_t^{k-1} (x_{t-1}^k ) \right|   \Big) {\tt{Rel}}\_{\tt{Err}}_t^k
\end{equation}
where ${\tt{Rel}}\_{\tt{Err}}_t^k$ is given by \eqref{rel_error}.
\end{rem}

\section{Inexact cuts in SDDP applied to linear programs} \label{iddp}

\subsection{Problem formulation assumptions, and algorithm}

We are interested in solution methods for linear Stochastic Dynamic Programming equations:
the first stage problem is 
\begin{equation}\label{firststodp}
\mathcal{Q}_1( x_0 ) = \left\{
\begin{array}{l}
\min_{x_1 \in \mathbb{R}^n} c_1^T x_1 + \mathcal{Q}_2 ( x_1 )\\
A_{1} x_{1} + B_{1} x_{0} = b_1,
x_1 \geq 0
\end{array}
\right.
\end{equation}
for $x_0$ given and for $t=2,\ldots,T$, $\mathcal{Q}_t( x_{t-1} )= \mathbb{E}_{\xi_t}[ \mathfrak{Q}_t ( x_{t-1}, \xi_{t}  )  ]$ with
\begin{equation}\label{secondstodp} 
\mathfrak{Q}_t ( x_{t-1}, \xi_{t}  ) = 
\left\{ 
\begin{array}{l}
\min_{x_t \in \mathbb{R}^n} c_t^T x_t + \mathcal{Q}_{t+1} ( x_t )\\
A_{t} x_{t} + B_{t} x_{t-1} = b_t,
x_t \geq 0,
\end{array}
\right.
\end{equation}
with the convention that $\mathcal{Q}_{T+1}$ is null and
where for $t=2,\ldots,T$, random vector $\xi_t$ corresponds to the concatenation of the elements in random matrices $A_t, B_t$ which have a known
finite number of rows and random vectors $b_t, c_t$.
Moreover, it is assumed that $\xi_1$ is not random. For convenience, we will denote 
$$
X_t(x_{t-1}, \xi_t):=\{x_t \in \mathbb{R}^n : A_{t} x_{t} + B_{t} x_{t-1} = b_t, \,x_t \geq 0 \}.
$$
We make the following assumptions:
\begin{itemize}
\item[(H1-S)] The random vectors $\xi_2, \ldots, \xi_T$ are independent and have discrete distributions with finite support. 
\item[(H2-S)] The set $X_1(x_{0}, \xi_1 )$ is nonempty and bounded and for every $x_1 \in X_1(x_{0}, \xi_1 )$,
for every $t=2,\ldots,T$, for every realization $\tilde \xi_2, \ldots, \tilde\xi_t$ of $\xi_2,\ldots,\xi_t$,
for every $x_{\tau} \in X_{\tau}( x_{\tau-1} , \tilde \xi_{\tau}), \tau=2,\ldots,t-1$, the set $X_t( x_{t-1} , {\tilde \xi}_t )$
is nonempty and bounded.
\end{itemize}
We put $\Theta_1 = \{\xi_1\}$ and for $t\geq 2$
we will denote by $\Theta_t = \{\xi_{t 1},\ldots,\xi_{t M_t} \}$ the support  of $\xi_t$ for stage $t$ with
$p_{t i}= \mathbb{P}(\xi_t = \xi_{t i}) >0, i=1,\ldots,M_t$ and with vector $\xi_{t j}$ being the concatenation
of the elements in $A_{t j}, B_{t j}, b_{t j}, c_{t j}$.

Inexact SDDP applied to linear Stochastic Dynamic Programming equations \eqref{firststodp}, \eqref{secondstodp}
is a simple extension of SDDP, called ISDDP-LP, where the  subproblems of the forward and backward passes
are solved approximately. At iteration $k$, for $t=2,\ldots,T$,
 function  $\mathcal{Q}_t$ is approximated by a  
 piecewise affine lower bounding function $\mathcal{Q}_t^k$ 
which is a maximum of affine lower bounding functions $\mathcal{C}_{t}^i$ called inexact cuts:
$$
\mathcal{Q}_t^k(x_{t-1} ) = \max_{1 \leq i \leq k} \mathcal{C}_{t}^i( x_{t-1}  ) \mbox{ with }\mathcal{C}_{t}^i (x_{t-1})=\theta_{t}^i + \langle \beta_{t}^i , x_{t-1} \rangle
$$
where coefficients $\theta_{t}^i, \beta_{t}^i$ are computed as explained below. The steps of ISDDP-LP are as follows.\\

\par {\textbf{ISDDP-LP, Step 1: Initialization.}} For $t=2,\ldots,T$, take for $\mathcal{C}_t^0 =\mathcal{Q}_t^0$ 
a known lower bounding affine function for $\mathcal{Q}_t$. Set the iteration count $k$ to 1 and $\mathcal{Q}_{T+1}^0 \equiv 0$.\\
\par {\textbf{ISDDP-LP, Step 2: Forward pass.}} We generate a sample
${\tilde \xi}^k = (\tilde \xi_1^k, \tilde \xi_2^k,\ldots,\tilde \xi_T^k)$ from the distribution of $\xi^k \sim (\xi_1,\xi_2,\ldots,\xi_T)$,
with the convention that $\tilde \xi_1^k = \xi_1$. Using approximation $\mathcal{Q}_{t+1}^{k-1}$
of $\mathcal{Q}_{t+1}$  (computed at previous iterations), we compute a $\delta_t^k$-optimal basic feasible solution $x_t^k$ of the problem
\begin{equation}\label{pbforwardpass}
\left\{
\begin{array}{l}
\min_{x_t \in \mathbb{R}^n} x_t^T {\tilde c}_t^k + \mathcal{Q}_{t+1}^{k-1} ( x_t )\\
x_t \in X_t(x_{t-1}^k, {\tilde \xi}_{t}^k )
\end{array}
\right.
\end{equation}
for $t=1,\ldots,T$,
where $x_0^k=x_0$ and where $\tilde c_t^k$ is the realization of $c_t$ in $\tilde \xi_t^k$. For $k \geq 1$ and $t=1,\ldots,T$, define the function ${\underline{\mathfrak{Q}}}_t^k : \mathbb{R}^n {\small{\times}} \Theta_t \rightarrow  {\overline{\mathbb{R}}}$ by
\begin{equation}\label{backwardt0}
{\underline{\mathfrak{Q}}}_t^k (x_{t-1} , \xi_t   ) =  
\left\{
\begin{array}{l}
\min_{x_t \in \mathbb{R}^n} c_t^T x_t + \mathcal{Q}_{t+1}^k ( x_t )\\
x_t \in X_t(x_{t-1}, \xi_{t} ).
\end{array}
\right.
\end{equation}
With this notation, we have 
\begin{equation}\label{forwdefQ}
{\underline{\mathfrak{Q}}}_t^{k-1} (x_{t-1}^k , {\tilde \xi}_t^k   )
\leq  \langle {\tilde c}_t^k ,  x_t^k \rangle + \mathcal{Q}_{t+1}^{k-1}( x_t^k ) \leq {\underline{\mathfrak{Q}}}_t^{k-1} (x_{t-1}^k , {\tilde \xi}_t^k   ) + \delta_t^k.
\end{equation}

\par {\textbf{ISDDP-LP, Step 3: Backward pass.}} 
The backward pass builds inexact cuts for $\mathcal{Q}_t$ at $x_{t-1}^k$ computed in the forward pass.
For $t=T+1$,  we have $\mathcal{Q}_{t}^k = \mathcal{Q}_{T+1}^k \equiv 0$, i.e., $\theta_{T+1}^k$ and $\beta_{T+1}^k$ are null. 
For $j=1,\ldots,M_T$, we solve approximately the problem
\begin{equation}\label{backwardT}
\mathfrak{Q}_T ( x_{T-1}^k, \xi_{T j}  ) = 
\left\{ 
\begin{array}{l}
\displaystyle \min_{x_T \in \mathbb{R}^n} c_{T j}^T x_T \\
A_{T j} x_{T} + B_{T j} x_{T-1}^k = b_{T j},
x_T \geq 0,
\end{array}
\right.
\mbox{ with dual }
\left\{ 
\begin{array}{l}
\max_{\lambda} \lambda^T ( b_{T j} - B_{T j} x_{T-1}^k )\\
A_{T j}^T \lambda \leq c_{T j}.
\end{array}
\right.
\end{equation}
Let $\lambda_{T j}^k$ be an $\varepsilon_T^k$-optimal basic feasible solution of the dual problem above:
$A_{T j}^T \lambda_{T j}^k \leq c_{T j}$ and 
\begin{equation}\label{epssolbackT}
\mathfrak{Q}_T ( x_{T-1}^k, \xi_{T j}  ) - \varepsilon_T^k \leq   \langle \lambda_{T j}^k , b_{T j} - B_{T j} x_{T-1}^k \rangle  \leq \mathfrak{Q}_T ( x_{T-1}^k, \xi_{T j}  ). 
\end{equation}
We 
compute 
\begin{equation}\label{thetaTkbetaTk}
\theta_{T}^k= \sum_{j=1}^{M_T} p_{T j} \langle   b_{T j}, \lambda_{T j}^k \rangle \mbox{ and }\beta_{T}^k = -\sum_{j=1}^{M_T} p_{T j} B_{T j}^T \lambda_{T j}^k.
\end{equation}
Using Proposition \ref{inexactlp} we have that $\mathcal{C}_{T}^k (x_{T-1})=\theta_{T}^k + \langle \beta_{T}^k , x_{T-1} \rangle$ is an inexact cut for
$\mathcal{Q}_T$ at $x_{T-1}^k$. Using \eqref{epssolbackT}, we also see that 
\begin{equation}\label{qualitycutT}
\mathcal{Q}_T( x_{T-1}^k ) - \mathcal{C}_T^k ( x_{T-1}^k ) \leq \varepsilon_T^k. 
\end{equation}

Then for $t=T-1$ down to $t=2$, knowing $\mathcal{Q}_{t+1}^k \leq \mathcal{Q}_{t+1}$,
for $j=1,\ldots,M_t$, consider the optimization problem
\begin{equation}\label{backwardt}
{\underline{\mathfrak{Q}}}_t^k ( x_{t-1}^k, \xi_{t j}  ) = 
\left\{ 
\begin{array}{l}
\displaystyle \min_{x_t} c_{t j}^T x_t + \mathcal{Q}_{t+1}^k ( x_t ) \\
x_t \in X_t( x_{t-1}^k , \xi_{t j} )
\end{array}
\right.
=
\left\{ 
\begin{array}{l}
\displaystyle \min_{x_t, f} c_{t j}^T x_t + f \\
A_{t j} x_{t} + B_{t j} x_{t-1}^k = b_{t j}, x_t \geq 0,\\
f \geq \theta_{t+1}^i + \langle \beta_{t+1}^i , x_t  \rangle, i=1,\ldots,k,
\end{array}
\right.
\end{equation}
with optimal value ${\underline{\mathfrak{Q}}}_t^k ( x_{t-1}^k, \xi_{t j}  )$.
Observe that due to (H2-S) the above problem is feasible and has a finite optimal value. Therefore ${\underline{\mathfrak{Q}}}_t^k ( x_{t-1}^k, \xi_{t j}  )$
can be expressed as the optimal value of the corresponding dual problem:
\begin{equation}\label{dualpbtback}
{\underline{\mathfrak{Q}}}_t^k ( x_{t-1}^k, \xi_{t j}  ) = 
\left\{
\begin{array}{l}
\displaystyle \max_{\lambda, \mu} \lambda^T( b_{t j} - B_{t j} x_{t-1}^k   ) + \sum_{i=1}^k \mu_{i} \theta_{t+1}^i  \\
A_{t j}^T \lambda +\displaystyle  \sum_{i=1}^k \mu_{i} \beta_{t+1}^i \leq c_{t j},\;\sum_{i=1}^k \mu_{i}=1,\\
\mu_{i} \geq 0,\,i=1,\ldots,k.
\end{array}
\right.
\end{equation}
Let $(\lambda_{t j}^k, \mu_{t j}^k )$ be an  $\varepsilon_t^k$-optimal basic  feasible solution of dual problem \eqref{dualpbtback}
and let ${\underline{\mathcal{Q}}}_t^k$ be the function given by ${\underline{\mathcal{Q}}}_t^k ( x_{t-1} ) = \sum_{j=1}^{M_t} p_{t j} {\underline{\mathfrak{Q}}}_t^k( x_{t-1} , \xi_{t j} )$.
We compute 
\begin{equation}\label{formulathetabetatk}
\theta_{t}^k =\sum_{j=1}^{M_t} p_{t j}\Big(  \langle  \lambda_{t j}^k ,  b_{t j} \rangle +  \langle    \mu_{t j}^k , \theta_{t+1, k} \rangle \Big) \mbox{ and }
\beta_{t}^k =- \sum_{j=1}^{M_t} p_{t j}B_{t j}^T \lambda_{t j}^k,
\end{equation}
where vector $\theta_{t+1, k}$ has components $\theta_{t+1}^i, i=1,\ldots,k$,
arranged in the same order as components $\mu_{t j}^k(i), i=1,\ldots,k$, of $\mu_{t j}^k$. Setting $\mathcal{C}_t^k ( x_{t-1} ) = \theta_t^k + \langle \beta_t^k , x_{t-1} \rangle$ and using Proposition \ref{inexactlp}, we have 
\begin{equation}\label{cutt}
{\underline{\mathcal{Q}}}_t^k ( x_{t-1} ) \geq \mathcal{C}_t^k ( x_{t-1} ) \;\;\mbox{ and }\;\;{\underline{\mathcal{Q}}}_t^k ( x_{t-1}^k ) - \mathcal{C}_t^k  ( x_{t-1}^k )    \leq \varepsilon_t^k.
\end{equation}
Using the fact that $\mathcal{Q}_{t+1}^k( x_{t-1} )  \leq \mathcal{Q}_{t+1}( x_{t-1} )$, we have 
${\underline{\mathfrak{Q}}}_t^k(x_{t-1}, \xi_{t j}) \leq \mathfrak{Q}_t(x_{t-1}, \xi_{t j})$,
${\underline{\mathcal{Q}}}_t^k(x_{t-1}) \leq \mathcal{Q}_t(x_{t-1})$,
and therefore 
\begin{equation}
\mathcal{Q}_t ( x_{t-1} ) \geq \mathcal{C}_t^k ( x_{t-1} ) 
\end{equation}
which shows that $\mathcal{C}_t^k$ is an inexact cut for $\mathcal{Q}_t$.\\
\par {\textbf{ISDDP-LP, Step 4:} Do $k \leftarrow k+1$ and go to Step 2.\\

\if{
\par The previous algorithm can be modified solving in the backward pass some of the subproblems only. 
This gives rise to Inexact DOASA whose steps are given below in the case when the randomness only appears
in the right-hand side of the constraints, namely when $\xi_t=b_t$.\\

\par Steps 1, 2, and 4 of IDOASA are identical to Steps 1, 2, and 4 of ISDDP-LP.\\
\par  {\textbf{Step 4 of IDOASA is as follows}}: set $\mathcal{D}_T^k = \emptyset$,
take a sample $\Theta_T^k \subset \Theta_T$
and for each $b_{T j} \in \Theta_t^k$ compute an $\varepsilon_T^k$-optimal solution $\lambda_{T j}^k$
of dual problem \eqref{backwardT} and add $\lambda_{T j}^k$ to set $\mathcal{D}_T^k$. For each $b_{T j} \notin \Theta_t^k$, compute 
$$
\lambda_{T j}^k = \mbox{argmax} \{ \lambda^T(b_{T j} - B_{T j} x_{T-1}^k ) \;  : \; \lambda \in \mathcal{D}_T^k\}.
$$
Compute $\theta_T^k, \beta_{T}^k$ by \eqref{thetaTkbetaTk}.
For $t=T-1$ down to $t=2$, set $\mathcal{D}_t^k = \emptyset$, take a sample $\Theta_t^k \subset \Theta_t$
and for each $b_{t j} \in \Theta_t^k$ compute an $\varepsilon_t^k$-optimal solution $(\lambda_{t j}^k, \mu_{t j}^k)$
of dual problem \eqref{dualpbtback} and add $(\lambda_{t j}^k, \mu_{t j}^k )$ to set $\mathcal{D}_t^k$. For each $b_{t j} \notin \Theta_t^k$, compute 
$$
(\lambda_{t j}^k , \mu_{t j}^k )  = \mbox{argmax} \{\lambda^T( b_{t j} - B_{t j} x_{t-1}^k   ) + \sum_{i=1}^k \mu_{i} \theta_{t+1}^i  \;  : \; (\lambda, \mu) \in \mathcal{D}_t^k \}.
$$
Compute $\theta_t^k, \beta_{t}^k$ by \eqref{formulathetabetatk}.\\

\par To put it in a nutshell, IDOASA is a variant of DOASA where all subproblems solved in the backward and 
forward passes are solved approximately, namely $\varepsilon_t^k$-optimal solutions are computed for the subproblems
of stage $t$ and iteration $k$.
Observe that since $\xi_t=b_t$, we have $A_{t j}=A_t$, $c_{t j}=c_T$ and therefore
for fixed $k$ dual problems \eqref{backwardT} (when $b_{T j}$ varies in $\Theta_T^k$)
have the same feasible sets. Similarly, for fixed $k$ dual problems \eqref{dualpbtback} 
(when $b_{t j}$ varies in $\Theta_t^k$) have the same feasible sets. This justifies that 
$\mathcal{C}_t^k$ computed by IDOASA is a valid cut (an affine lower bounding function) for $\mathcal{Q}_t$.
}\fi

Similarly to IDDP-LP, the collection of distinct values $(\theta_t^k, \beta_t^k)_k$
is finite and cut coefficients $(\theta_t^k, \beta_t^k)_k$
are uniformly bounded. 

\subsection{Convergence analysis}\label{conv-analsddp}

In this section we state a convergence result for ISDDP-LP in Theorem \ref{convisddplp} when noises $\delta_t^k, \varepsilon_t^k$
are bounded and in Theorem \ref{convisddp1} when these noises asymptotically vanish.

We will assume that the sampling procedure in ISDDP-LP satisfies the following property:\\

\par (H3-S) The samples in the backward passes are independent: $(\tilde \xi_2^k, \ldots, \tilde \xi_T^k)$ is a realization of
$\xi^k=(\xi_2^k, \ldots, \xi_T^k) \sim (\xi_2, \ldots,\xi_T)$ 
and $\xi^1, \xi^2,\ldots,$ are independent.\\

Before stating our first convergence theorem, we need more notation. Due to Assumption (H1-S), the realizations of $(\xi_t)_{t=1}^T$ form a scenario tree of depth $T+1$
where the root node $n_0$ associated to a stage $0$ (with decision $x_0$ taken at that
node) has one child node $n_1$
associated to the first stage (with $\xi_1$ deterministic).
We denote by $\mathcal{N}$ the set of nodes and for a node $n$ of the tree, we define: 
\begin{itemize}
\item $C(n)$: the set of children nodes (the empty set for the leaves);
\item $x_n$: a decision taken at that node;
\item $p_n$: the transition probability from the parent node of $n$ to $n$;
\item $\xi_n$: the realization of process $(\xi_t)$ at node $n$\footnote{The same notation $\xi_{\tt{Index}}$ is used to denote
the realization of the process at node {\tt{Index}} of the scenario tree and the value of the process $(\xi_t)$
for stage {\tt{Index}}. The context will allow us to know which concept is being referred to.
In particular, letters $n$ and $m$ will only be used to refer to nodes while $t$ will be used to refer to stages.}:
for a node $n$ of stage $t$, this realization $\xi_n$ contains in particular the realizations
$c_n$ of $c_t$, $b_n$ of $b_t$, $A_{n}$ of $A_{t}$, and $B_{n}$ of $B_{t}$.
\end{itemize}

Next, we define for iteration $k$ decisions $x_n^k$ for all node $n$ of the scenario tree
simulating the policy obtained in the end of iteration $k-1$ replacing 
cost-to-go function $\mathcal{Q}_t$ by 
$\mathcal{Q}_{t}^{k-1}$ for $t=2,\ldots,T+1$:
\rule{\linewidth}{1pt}
{\textbf{Simulation of the policy in the end of iteration $k-1$.}}\\
\hspace*{0.4cm}{\textbf{For }}$t=1,\ldots,T$,\\
\hspace*{0.8cm}{\textbf{For }}every node $n$ of stage $t-1$,\\
\hspace*{1.6cm}{\textbf{For }}every child node $m$ of node $n$, compute a $\delta_t^k$-optimal solution $x_m^k$ of
\begin{equation} \label{defxtkj}
{\underline{\mathfrak{Q}}}_t^{k-1}( x_n^k , \xi_m ) = \left\{
\begin{array}{l}
\displaystyle \inf_{x_m} \; c_m^T x_m  + \mathcal{Q}_{t+1}^{k-1}( x_m ) \\
x_m \in X_t( x_n^k, \xi_m ),
\end{array}
\right.
\end{equation}
\hspace*{2.4cm}where $x_{n_0}^k = x_0$.\\
\hspace*{1.6cm}{\textbf{End For}}\\
\hspace*{0.8cm}{\textbf{End For}}\\
\hspace*{0.5cm}{\textbf{End For}}\\
\vspace*{-0.6cm}\\
\rule{\linewidth}{1pt}\\
We are now in a position to state our first convergence theorem for ISDDP-LP:
\begin{thm}[Convergence of ISDDP-LP with bounded noises] \label{convisddplp}
Consider the sequences of decisions $(x_n^k)_{n \in \mathcal{N}}$ and of functions $(\mathcal{Q}_t^k)$ generated by ISDDP-LP.
Assume that (H1-S), (H2-S), (H3-S) hold, and that 
noises $\varepsilon_t^k$ and $\delta_t^k$ are bounded: $0 \leq \varepsilon_t^k \leq {\bar \varepsilon}$,
$0 \leq \delta_t^k \leq {\bar \delta}$ for finite ${\bar \delta}, {\bar \varepsilon}$.
Then the following holds:
\begin{itemize}
\item[(i)] for $t=2,\ldots,T+1$, for all node $n$ of stage $t-1$,  almost surely
\begin{equation}\label{lowerbounds}
0 \leq \varliminf_{k \rightarrow +\infty} \mathcal{Q}_t( x_{n}^k ) - \mathcal{Q}_t^k( x_{n}^k ) \leq  
\varlimsup_{k \rightarrow +\infty} \mathcal{Q}_t( x_{n}^k ) - \mathcal{Q}_t^k( x_{n}^k ) \leq ({\bar \delta}  +  {\bar{\varepsilon}})(T-t+1);
\end{equation}
\item[(ii)] for every $t=2,\ldots,T$, for all node $n$ of stage $t-1$,
the limit superior and limit inferior of the sequence of upper bounds  $\Big( \displaystyle  \sum_{m \in C(n)} p_m (  c_m^T x_m^k   + \mathcal{Q}_{t+1}( x_m^k ))  \Big)_k$ satisfy almost surely
\begin{equation}\label{uppbounds}
\begin{array}{l}
0 \leq 
\varliminf_{k \rightarrow +\infty} \displaystyle \sum_{m \in C(n)} p_m \Big[ c_m^T x_m^k   + \mathcal{Q}_{t+1}( x_m^k ) \Big] - \mathcal{Q}_t( x_n^k ),  \\
\varlimsup_{k \rightarrow +\infty} \displaystyle  \sum_{m \in C(n)} p_m \Big[ c_m^T x_m^k   + \mathcal{Q}_{t+1}( x_m^k ) \Big] - \mathcal{Q}_t( x_n^k ) \leq ({\bar \delta} + {\bar{\varepsilon}})(T-t+1);
\end{array}
\end{equation}
\item[(iii)]
the limit superior and limit inferior of the sequence of lower bounds  $({\underline{\mathfrak{Q}}}_1^{k-1}( x_{0} , \xi_1 ) )_k$ on the optimal value 
$\mathcal{Q}_1( x_0 )$ of \eqref{firststodp} satisfy almost surely
\begin{equation}\label{lbounds}
\mathcal{Q}_1( x_{0} )- {\bar{\delta}} T   - {\bar{\varepsilon}} (T-1) \leq \varliminf_{k \rightarrow +\infty} {\underline{\mathfrak{Q}}}_1^{k-1}( x_{0} , \xi_1 ) \leq  
\varlimsup_{k \rightarrow +\infty} {\underline{\mathfrak{Q}}}_1^{k-1}( x_{0} , \xi_1 ) \leq \mathcal{Q}_1( x_{0} ).
\end{equation}
\end{itemize}
\end{thm}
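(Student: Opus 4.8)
The plan is to replay the proof of Theorem~\ref{conviddplp} stage by stage, inserting the two ingredients needed to pass from a single deterministic trajectory to a scenario tree explored by random sampling: an expectation over the children of each node in the recursive estimate, and an almost-sure argument ensuring that cuts keep being refined at every node. I would start with the probabilistic backbone. By (H1-S) the realizations of $(\xi_t)_{t=1}^T$ form a finite tree with strictly positive transition probabilities $p_m$; together with the independence of the backward samples in (H3-S), the second Borel--Cantelli lemma yields that, almost surely, every node $n$ is sampled in the backward pass for infinitely many iterations. I fix a full-measure event on which this holds and argue pathwise, writing $S_n$ for the (infinite, random) set of iterations at which the backward pass produces an inexact cut $\mathcal{C}_t^k$ centred at $x_n^k$.

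Next I would prove (i) by backward induction on $t$, carrying the \emph{full-sequence} bound \eqref{lowerbounds} as the induction hypothesis; the base case $t=T+1$ is immediate since $\mathcal{Q}_{T+1}\equiv\mathcal{Q}_{T+1}^k\equiv0$. Fixing a node $n$ of stage $t-1$ and working first along $k\in S_n$ (where the stage-$(t-1)$ trial point is exactly $x_n^k$), I reproduce the chain \eqref{firstddp} in its expectation form by combining $\mathcal{Q}_t^k\ge\mathcal{C}_t^k$, the near-exactness \eqref{cutt} with $\varepsilon_t^k\le\bar\varepsilon$, monotonicity of $\underline{\mathcal{Q}}_t^k$ in $k$, the $\delta_t^k$-optimality \eqref{defxtkj} of the children decisions with $\delta_t^k\le\bar\delta$, and the identity $\mathcal{Q}_t(x_n^k)=\sum_{m\in C(n)}p_m\,\mathfrak{Q}_t(x_n^k,\xi_m)$, obtaining
\[
0\le \mathcal{Q}_t(x_n^k)-\mathcal{Q}_t^k(x_n^k)\le \bar\varepsilon+\bar\delta+\sum_{m\in C(n)}p_m\big[\mathcal{Q}_{t+1}(x_m^k)-\mathcal{Q}_{t+1}^{k-1}(x_m^k)\big].
\]
Applying Lemma~\ref{limsuptechlemma} to the full sequence $(x_m^k)_k$ for each child $m$ (to pass from $\mathcal{Q}_{t+1}^k$ to $\mathcal{Q}_{t+1}^{k-1}$), together with the instance of \eqref{lowerbounds} at $t+1$, bounds each bracket's limit superior by $(\bar\delta+\bar\varepsilon)(T-t)$; since the $p_m$ sum to one this gives $\varlimsup_{k\in S_n}\big(\mathcal{Q}_t(x_n^k)-\mathcal{Q}_t^k(x_n^k)\big)\le(\bar\delta+\bar\varepsilon)(T-t+1)$.

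The main obstacle is to upgrade this estimate from the random subsequence $S_n$ to the full sequence demanded by \eqref{lowerbounds}; this is precisely the stochastic counterpart of Lemma~\ref{limsuptechlemma}. The functions $\mathcal{Q}_t^k$ are nondecreasing in $k$, bounded above by $\mathcal{Q}_t$, and uniformly Lipschitz (their slopes $\beta_t^k$ are dual vertices of polyhedra that are bounded under (H2-S)); hence they converge uniformly on the compact trial-point set to a limit $\mathcal{Q}_t^\infty\le\mathcal{Q}_t$, and the full-sequence limit superior equals $\mathcal{Q}_t(x_*)-\mathcal{Q}_t^\infty(x_*)$ at some accumulation point $x_*$ of $(x_n^k)_k$. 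The crux is to show $\mathcal{Q}_t(x_*)-\mathcal{Q}_t^\infty(x_*)\le(\bar\delta+\bar\varepsilon)(T-t+1)$, and the decisive fact is that the event $\{k\in S_n\}$ depends only on the sample $\tilde\xi^k$ and is therefore independent of the trial point $x_n^k$, which is a function of $\tilde\xi^1,\ldots,\tilde\xi^{k-1}$ through the approximations built before iteration $k$. Consequently the visited iterations probe the trial-point sequence without bias, so an accumulation point of the full sequence is also approached along $S_n$, where the cut-based estimate of the previous paragraph applies. I expect this independence/accumulation-point argument, rather than any of the algebraic manipulations, to be the technical heart of the proof.

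Finally, (ii) and (iii) follow from (i) as in Theorem~\ref{conviddplp}. For (ii), the feasibility bound $\mathfrak{Q}_t(x_n^k,\xi_m)\le c_m^Tx_m^k+\mathcal{Q}_{t+1}(x_m^k)$ gives nonnegativity of the gap, while \eqref{defxtkj} and $\underline{\mathfrak{Q}}_t^{k-1}\le\mathfrak{Q}_t$ give $\sum_{m\in C(n)}p_m[c_m^Tx_m^k+\mathcal{Q}_{t+1}(x_m^k)]-\mathcal{Q}_t(x_n^k)\le\bar\delta+\sum_{m\in C(n)}p_m[\mathcal{Q}_{t+1}(x_m^k)-\mathcal{Q}_{t+1}^{k-1}(x_m^k)]$, whose limit superior is controlled by the instance of (i) at $t+1$ and Lemma~\ref{limsuptechlemma}. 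For (iii) no subsequence upgrade is needed, since stage $1$ is deterministic and its node is sampled at every iteration: I adapt the chain \eqref{finallemmalpbounded} to the first stage, the bound $\underline{\mathfrak{Q}}_1^{k-1}(x_0,\xi_1)\le\mathcal{Q}_1(x_0)$ being immediate from $\mathcal{Q}_2^{k-1}\le\mathcal{Q}_2$, and the lower bound on the liminf following from \eqref{forwdefQ} at $t=1$ together with the $t=2$ instance of (i) and Lemma~\ref{limsuptechlemma}, which produces the gap $\bar\delta T+\bar\varepsilon(T-1)$.
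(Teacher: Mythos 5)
Your algebraic skeleton --- the backward induction, the expectation form of the chain \eqref{firstddp} along the iterations $k\in \mathcal{S}_n$ where the sampled scenario passes through $n$, the use of Lemma \ref{limsuptechlemma} to pass from $\mathcal{Q}_{t+1}^k$ to $\mathcal{Q}_{t+1}^{k-1}$, and the derivations of (ii) and (iii) --- coincides with the paper's proof (items (ii) and (iii) are handled there exactly as you describe). The divergence, and the gap, is at the step you yourself flag as the technical heart: upgrading the bound from the random subsequence $\mathcal{S}_n$ to the full sequence. Your justification --- $\{k\in\mathcal{S}_n\}$ depends only on $\tilde\xi^k$ and is independent of $x_n^k$, hence the visited iterations ``probe the trial-point sequence without bias,'' hence every accumulation point $x_*$ of the full sequence is also approached along $\mathcal{S}_n$ --- is asserted, not proved, and it does not follow directly from the independence you cite: $x_*$ is a tail random variable, determined by the whole trajectory, so the event $\{\|x_n^k-x_*\|<\epsilon\}$ is \emph{not} measurable with respect to $\sigma(\tilde\xi^1,\ldots,\tilde\xi^{k-1})$, and the independence of the indicator $y_n^k$ from the pre-$k$ history cannot be applied to it as stated. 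To complete your route you would need, say, a countable covering by fixed balls $B$ (so that $\{x_n^k\in B\}$ is previsible) combined with the conditional (L\'evy) extension of the Borel--Cantelli lemma; none of this is in your sketch, and the second Borel--Cantelli fact you do establish --- each node is sampled infinitely often --- is by itself insufficient, since an infinite $\mathcal{S}_n$ could a priori miss the neighborhoods where the full-sequence limit superior is attained.

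For comparison, the paper closes this step by contradiction: it defines $K=\{k\ :\ \mathcal{Q}_t(x_n^k)-\mathcal{Q}_t^{k-1}(x_n^k)>(\bar\delta+\bar\varepsilon)(T-t+1)+\varepsilon_0\}$ --- crucially with $\mathcal{Q}_t^{k-1}$, so that membership in $K$ is decided before $\tilde\xi^k$ is drawn, making the indicators $y_n^{k_1},y_n^{k_2},\ldots$ ($k_1<k_2<\cdots$ the elements of $K$) i.i.d.\ under (H3-S); the strong law of large numbers then forces a positive limiting fraction of the $k_j$ to lie in $\mathcal{S}_n$, while a second application of Lemma \ref{limsuptechlemma} along the reindexed subsequence $z_1<z_2<\cdots$ of $\mathcal{S}_n$ (using $z_k\geq z_{k-1}+1$ and the monotonicity of $k\mapsto\mathcal{Q}_t^k$) shows that $K\cap\mathcal{S}_n$ is finite --- a contradiction. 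So the previsibility bookkeeping ($k-1$ versus $k$) must enter the probabilistic argument itself, not only the analytic chain; this is the one idea your proposal is missing. A secondary remark: your uniform-convergence reduction requires the $\mathcal{Q}_t^k$ to be equi-Lipschitz, which you justify by saying the slopes $\beta_t^k$ are dual \emph{vertices}; in the bounded-noise regime the $\varepsilon_t^k$-optimal dual solutions need not be vertices (extremality is assumed only in the vanishing-noise Theorem \ref{convidoasa}). The paper implicitly uses the same equi-Lipschitz property when invoking Lemma \ref{limsuptechlemma}, so this is a shared premise rather than a gap specific to your argument, but your stated justification for it is not valid as written.
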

\begin{proof} The proof is a simple combination of arguments from the proof of 
Theorem 3.1 in \cite{lecphilgirar12} and of Theorem \ref{conviddplp} from Section \ref{iddpcon}. For interested readers, the detailed proof is provided in the appendix.\hfill
\end{proof}

Theorem \ref{convisddp1} below shows the convergence of ISDDP-LP in a finite number of iterations when noises $\varepsilon_t^k, \delta_t^k$ asymptotically vanish.

\begin{thm} [Convergence of ISDDP-LP with asymptotically vanishing noises] \label{convisddp1}
Consider the sequences of decisions $(x_n^k)$ and of functions $(\mathcal{Q}_t^k)$ generated by ISDDP-LP.
Let Assumptions (H1-S), (H2-S), and (H3-S) hold.
If $\lim_{k \rightarrow +\infty} \delta_t^k = \lim_{k \rightarrow +\infty} \varepsilon_t^k = 0$,
then ISDDP-LP converges with probability one in a finite number of iterations to an optimal solution to
\eqref{firststodp}, \eqref{secondstodp}.
\end{thm}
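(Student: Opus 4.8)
The plan is to mirror the deterministic argument from Theorem~\ref{convidoasa}, replacing the single trajectory by the whole scenario tree and upgrading the finiteness conclusions to ``with probability one.'' First I would invoke Assumption~(H1-S), which guarantees that the scenario tree is finite: each $\xi_t$ has finite support, so there are finitely many nodes $n$. As in the proof of Theorem~\ref{convidoasa}, I would require that every subproblem in the forward and backward passes be solved by a method returning an extremal point of the feasible polyhedron (e.g.\ the simplex method). Then the dual multipliers $\lambda_{Tj}^k$ computed in \eqref{backwardT} are extremal points of a bounded polyhedron, hence range over a finite set; consequently only finitely many distinct functions $\mathcal{Q}_T^k$ can ever be generated. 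Proceeding by backward induction on $t$, using \eqref{dualpbtback} and the fact that the cut coefficients $(\lambda_{tj}^k,\mu_{tj}^k)$ are extremal points, I would conclude that for each $t=2,\ldots,T$ only finitely many distinct functions $\mathcal{Q}_t^k$ can arise, and likewise only finitely many distinct trial decisions $x_n^k$ at each node $n$.

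Next I would argue that, because the collection of all possible $(\mathcal{Q}_t^k)_{t}$ and of all possible node decisions $(x_n^k)_n$ is finite, after some (random) iteration $k_1$ every subproblem solved in a forward or backward pass is an exact copy of one solved before. I would then combine this with the vanishing-noise hypothesis $\lim_k \delta_t^k = \lim_k \varepsilon_t^k = 0$: since only finitely many distinct subproblems occur, and the prescribed solution accuracy tends to zero, there is an iteration $k_0 \geq k_1$ beyond which every subproblem is in fact solved exactly (an exact optimal vertex is returned, the approximate solution coinciding with a true optimum once the tolerance drops below the positive optimality gap of the finitely many vertices that are not optimal). From iteration $k_0$ on, the generated functions $\mathcal{Q}_t^k$ no longer change and the algorithm behaves exactly like (exact) SDDP.

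The last step is to transfer the convergence of exact SDDP. From $k_0$ onward ISDDP-LP coincides with exact SDDP applied to \eqref{firststodp}, \eqref{secondstodp}, so I would invoke the known almost-sure finite convergence result for exact SDDP on multistage stochastic linear programs (as established, e.g., in the analysis of \cite{philpot} or Theorem~3.1 of \cite{lecphilgirar12}), together with Assumption~(H3-S) guaranteeing that the backward sampling visits, with probability one, all nodes infinitely often. Because the tree is finite, with probability one every node is sampled, so all relevant cuts stabilise and the stabilised functions $\mathcal{Q}_t^k$ are exact at the trial points; then the standard SDDP argument yields that $(x_1^k,\ldots,x_T^k)$, and indeed the whole policy encoded by $(x_n^k)$, is optimal for all $k \geq k_0$.

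The main obstacle I anticipate is the passage that upgrades ``finitely many distinct subproblems plus vanishing tolerance'' to ``subproblems eventually solved exactly.'' One must argue carefully that for each of the finitely many distinct linear programs there is a strictly positive threshold below which an $\varepsilon$-optimal extremal solution must be a true optimum; since there are finitely many such programs, the minimum of these thresholds is positive, and the vanishing-noise hypothesis eventually drives every tolerance below it. Handling this uniformly across all nodes and stages, and reconciling the random iteration indices arising from the sampling in (H3-S) with the deterministic finiteness bound, is the delicate part; the remainder reduces to citing the convergence of exact SDDP.
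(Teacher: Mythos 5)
Your proposal follows essentially the same route as the paper's own proof: almost-sure finiteness of the trial points and of the functions $\mathcal{Q}_t^k$ (via extremal points of bounded polyhedra under (H1-S), (H2-S)), hence eventual repetition of subproblems, exactness of the computed solutions once the vanishing tolerances $\delta_t^k,\varepsilon_t^k$ fall below the minimal positive optimality gap among the finitely many distinct linear programs, and then transfer to the finite almost-sure convergence of exact SDDP via Theorem 5 of \cite{philpot} under the sampling assumption (H3-S). Your explicit extremal-point (simplex) assumption and your careful threshold argument for the ``eventually solved exactly'' step are precisely what the paper's terser proof relies on implicitly through its reference to the arguments of Theorem \ref{convidoasa}.
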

\begin{proof}
The arguments are similar to the proof of Theorem \ref{convidoasa}.
Due to Assumptions (H1-S), (H2-S), ISDDP-LP 
generates almost surely a finite number of 
trial points $x_1^k, x_2^k,\ldots, x_T^k$.
Similarly, almost surely only a finite number of different functions $\mathcal{Q}_t^k, t=2,\ldots,T,$ can be generated.
Therefore, after some iteration $k_1$, every optimization subproblem solved in the forward and backward passes is a copy of 
an optimization problem solved previously. It follows that after some iteration $k_0$ all subproblems are solved exactly (optimal solutions
are computed for all subproblems) and functions $\mathcal{Q}_t^k$ do not change anymore.
Consequently, from iteration $k_0$ on, we can apply the arguments of the proof of convergence of (exact) SDDP applied to linear programs (see Theorem 5 in \cite{philpot}). \hfill 
\end{proof}

\begin{rem}\label{remchoicepssto} [Choice of parameters $\delta_t^k$ and $\varepsilon_t^k$]
As for IDDP-LP, we take $\delta_1^k$ is negligible (for instance $10^{-12}$) and the relative error
\begin{equation}\label{rel_errors2}
{\tt{Rel}}\_{\tt{Err}}_t^k = \frac{1}{k}\Big[ \overline{\varepsilon} - \left(\frac{\overline{\varepsilon} - \varepsilon_0}{T-2} \right) (t-2) \Big],
\end{equation}
for step $t \geq 2$ and iteration $k \geq 1$ (in both the forward and backward passes), which induces corresponding $\delta_t^k$ and $\varepsilon_t^k$ for $t \geq 2, k \geq 1$.

However, it seems more delicate to define sound absolute errors. Ultimately, absolute errors \eqref{abserrorddp}
used for IDDP-LP could be replaced by
\begin{equation} \label{abserrorddpsto}
\varepsilon_t^k = \max\Big(1, \left|{\underline{\mathfrak{Q}}}_t^{k-1} (x_{t-1}^k ,  \tilde \xi_t^k ) \right|   \Big) {\tt{Rel}}\_{\tt{Err}}_t^k
\end{equation}
with ${\tt{Rel}}\_{\tt{Err}}_t^k$ still given by \eqref{rel_errors2}.
\end{rem}

\section{Numerical experiments} \label{numexp}

Our goal in this section is to compare SDDP and ISDDP-LP (denoted for short ISDDP in what follows) on the risk-neutral portfolio problem with direct transaction costs presented in
Section 5.1 of \cite{guilejtekregsddp} (see \cite{guilejtekregsddp} for details).
For this application, $\xi_t$ is the vector of asset returns: if 
$n$ is the number of
risky assets, $\xi_t$ has size $n+1$,
$\xi_t(1:n)$ is the vector of risky asset returns for stage $t$ 
while $\xi_t(n+1)$ is the return of the risk-free asset. We generate four instances of this portfolio problem as follows.

For fixed $T$ (number of stages) and $n$ (number of risky assets),
the distributions of $\xi_t(1:n),t=2,\ldots,T$,  have $M$ realizations 
with $p_{t i}=\mathbb{P}(\xi_t = \xi_{t i})=1/M$, and
$\xi_1(1:n), \xi_{t 1}(1:n), \ldots, \xi_{t M}(1:n)$ obtained sampling
from a normal distribution with mean and standard deviation chosen 
randomly in respectively the intervals $[0.9,1.4]$ and $[0.1,0.2]$.
The monthly return $\xi_t(n+1)$ of the risk-free asset is $1.01$ for all $t$.
The initial portfolio $x_0$ has components uniformly distributed in $[0,10]$ (vector of initial wealth in each asset).
The largest possible position in any security is set to $u_i=20\%$.  
Transaction costs are known with
$\nu_t(i)=\mu_t(i)$
obtained sampling from the distribution of the random variable 
$0.08+0.06\cos(\frac{2\pi}{T} U_T )$ where $U_T$ is a random variable
with a discrete distribution over the set of integers $\{1,2,\ldots,T\}$.
Our four instances of the portfolio problem are obtained taking
for $(M,T,n)$ the combinations of values $(100,10,50)$, $(100,30,50)$, $(50,20,50)$, and 
$(50,40,10)$.
All linear subproblems of the forward and backward passes
are solved numerically using Mosek solver \cite{mosek} and
for ISDDP, we solve approximately these subproblems limiting the number of iterations 
of Mosek solver as indicated in Table \ref{tablenumberiter0} in the Appendix.
The strategy given in this table is (as indicated in Remark \ref{remchoicepssto}) to increase the accuracy
(or, equivalently, increase the maximal number of iterations allowed for Mosek solver)
of the solutions to subproblems as ISDDP iteration increases and 
for a given iteration of ISDDP, to increase the accuracy 
(or, equivalently, increase the maximal number of iterations allowed for Mosek solver)
of the solutions to subproblems as the number of stages increases from $t=2$ to $t=T$, knowing that we
solve exactly the subproblems for the last stage $T$ and for the first stage $t=1$.

SDDP and ISDDP were implemented in Matlab and the code was run on a Xeon E5-2670 processor with 
384 GB of RAM. 
For a given instance, SDDP and ISDDP were run using the same set of sampled scenarios 
along iterations. We stopped  SDDP algorithm when the gap is $<10\%$
and run ISDDP for the same number of iterations.\footnote{The gap is defined as $\frac{Ub-Lb}{Ub}$ where $Ub$ and $Lb$ correspond to upper and lower bounds, respectively. 
Though the portfolio problem is a maximization 
problem (of the mean income), we have rewritten it as a minimization problem (of the mean loss), of form \eqref{firststodp}, \eqref{secondstodp}.
The lower bound $Lb$ is the optimal value of the first stage problem and the 
upper  bound $Ub$ is the upper end of a 97.5\%-one-sided confidence interval on the optimal value for $N=100$ policy realizations, see 
\cite{shapsddp} for a detailed discussion on this stopping criterion.}

On our four instances, we then simulate the policies obtained with SDDP and ISDDP
on a set of 500 scenarios of returns. The gap between the two policies on these scenarios and the CPU time reduction
using ISDDP are given in 
Table \ref{tablecpuisddp}. In this table, the gap is defined by 
$100 \frac{\tt{Cost ISDDP}-\tt{Cost SDDP}}{\tt{Cost SDDP}}$
where 
${\tt{Cost ISDDP}}$ and $\tt{Cost SDDP}$ are respectively the mean cost for ISDDP and
SDDP policies on the 500 simulated scenarios and the CPU time reduction is given by
$100 \frac{\tt{Time SDDP}-\tt{Time ISDDP}}{\tt{Time SDDP}}$
where $\tt{Time SDDP}$ and $\tt{Time ISDDP}$ correspond to the time needed to 
compute SDDP and ISDDP policies (before running the Monte Carlo simulation), respectively. 

On all instances the gap is relatively small and ISDDP policy is computed faster than SDDP policy.

\begin{table}[H]
\centering
\begin{tabular}{|c|c|c|c|c|}
\hline
$M$ & $T$ & $n$ & Gap (\%) & CPU time reduction (\%) \\
\hline
50 & 20 &   50  & 0.1 & 6.2 \\
\hline
50 & 40 &   10  & 4.2 & 11.1 \\
\hline 
100 & 10 &   50  & 0.8 & 6.5 \\
\hline 
100 & 30 &   50  & 3.4 & 6.4 \\
\hline 
\end{tabular}
\caption{Empirical gap between SDDP and ISDDP policies and CPU time reduction for ISDDP over SDDP.}\label{tablecpuisddp}
\end{table}

\par More precisely, we report in Figure \ref{fig1sddp} (for instances with $(M,T,n)=(100,10,50)$ and $(M,T,n)=(100,30,50)$)
and Figure \ref{fig2sddp} (for instances with $(M,T,n)=(50,20,50)$
and $(M,T,n)=(50,40,10)$) three outputs along the iterations
of SDDP and ISDDP: the cumulative CPU time (in seconds), the number of iterations needed for Mosek LP solver to solve
all backward and forward subproblems, and the upper and lower bounds on the optimal value computed
by the methods (note that the upper bounds are only computed from iteration 100 on, because the past
$N=100$ iterations are used to compute them). 

\par These experiments (i) show that it is possible to 
obtain a near optimal policy quicker than SDDP solving approximately some subproblems in SDDP and (ii) confirm that ISDDP computes a valid lower bound since
first stage subproblems are solved exactly. For the first iterations, this lower bound can however be
distant from SDDP lower bound (see for instance the bottom left plots of Figures 
\ref{fig1sddp} and \ref{fig2sddp}). However, both  SDDP and ISDDP lower and upper bounds are 
quite close after 200 iterations, even if Mosek LP solver uses much less iterations to solve
the subproblems with ISDDP (see the middle plots of Figures \ref{fig1sddp}, \ref{fig2sddp}). 
The total CPU time needed by ISDDP is significantly inferior but this CPU time reduction decreases
when the number of iterations increases. If many iterations are required to solve the problem,
after a few hundreds iterations backward and forward subproblems are solved in similar CPU time for SDDP and ISDDP and the total CPU time
reduction starts to stabilize.

\begin{figure}
\centering
\begin{tabular}{cc}
\includegraphics[scale=0.6]{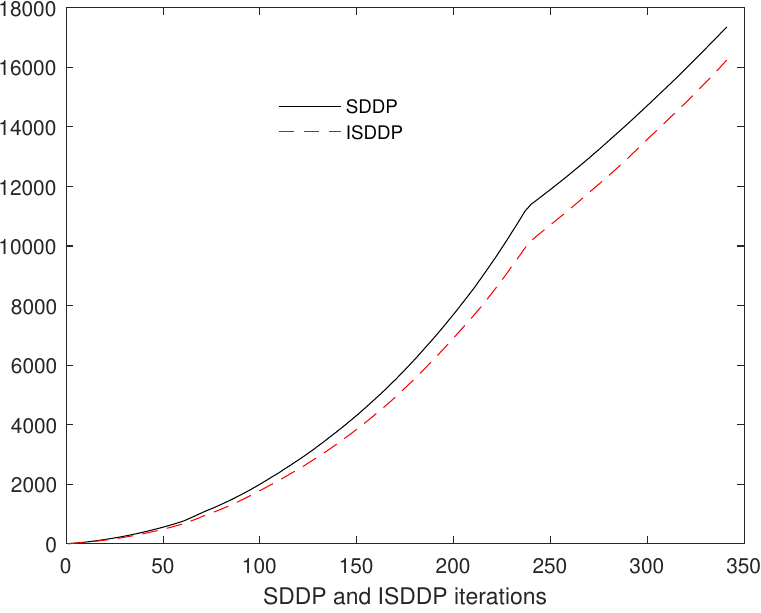}
&
\includegraphics[scale=0.6]{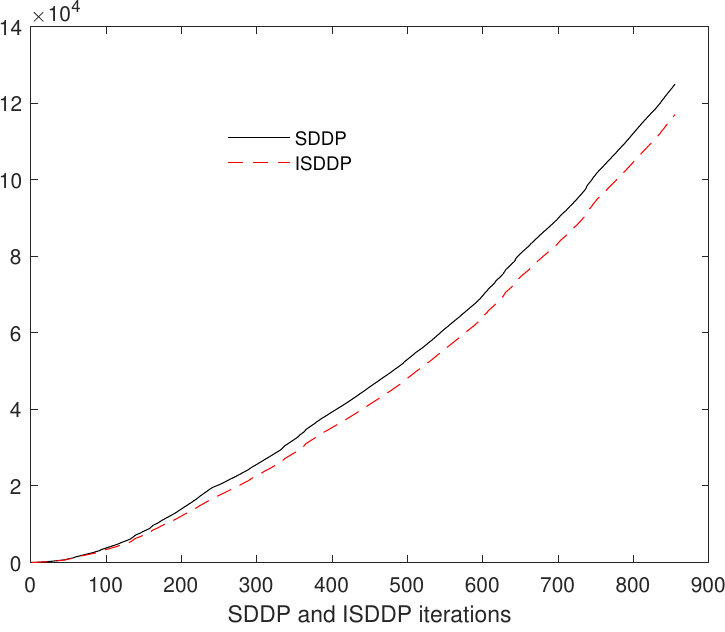}
\\
\includegraphics[scale=0.6]{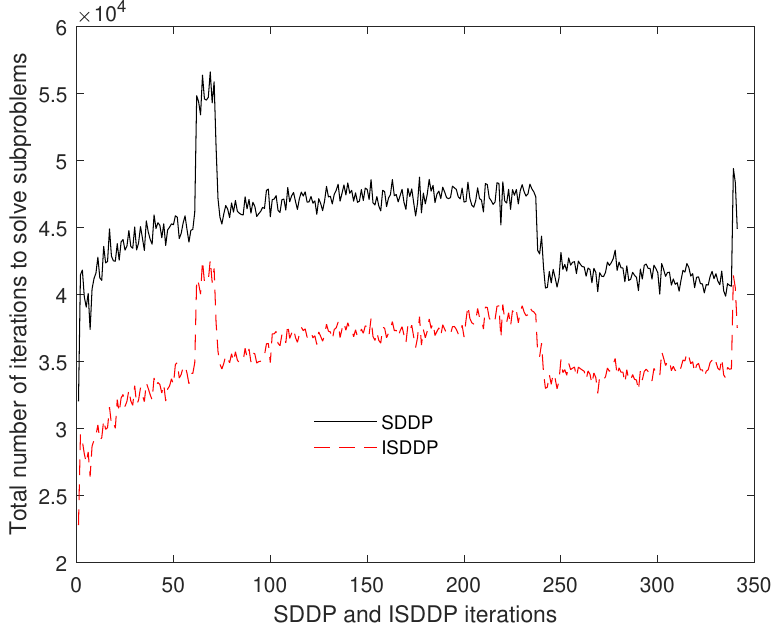}
&
\includegraphics[scale=0.6]{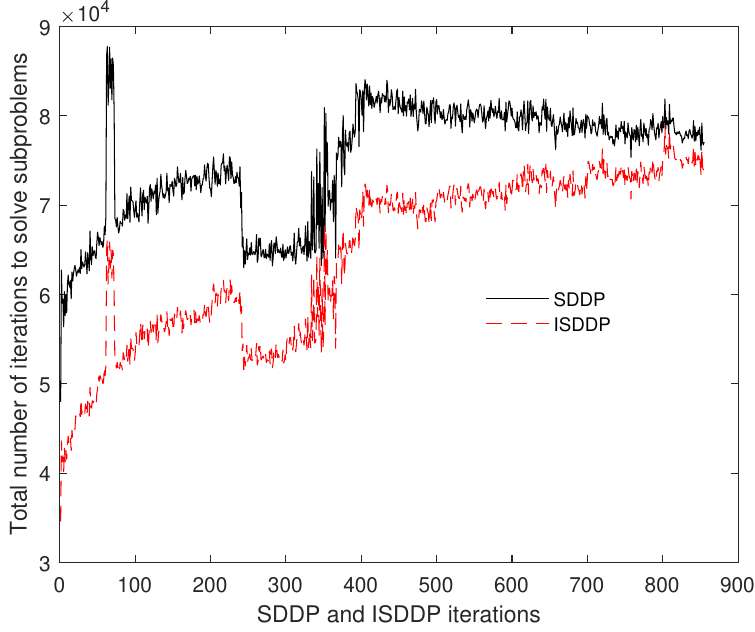}
\\
\includegraphics[scale=0.6]{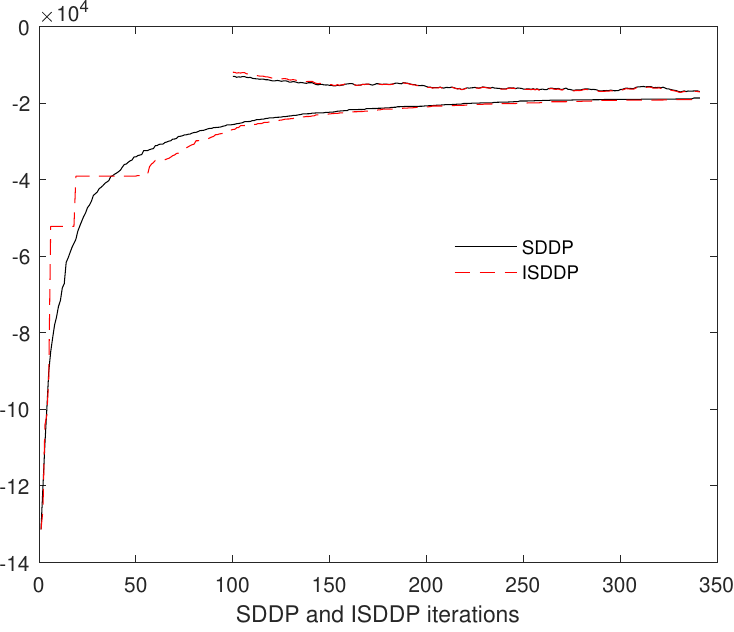}
&
\includegraphics[scale=0.6]{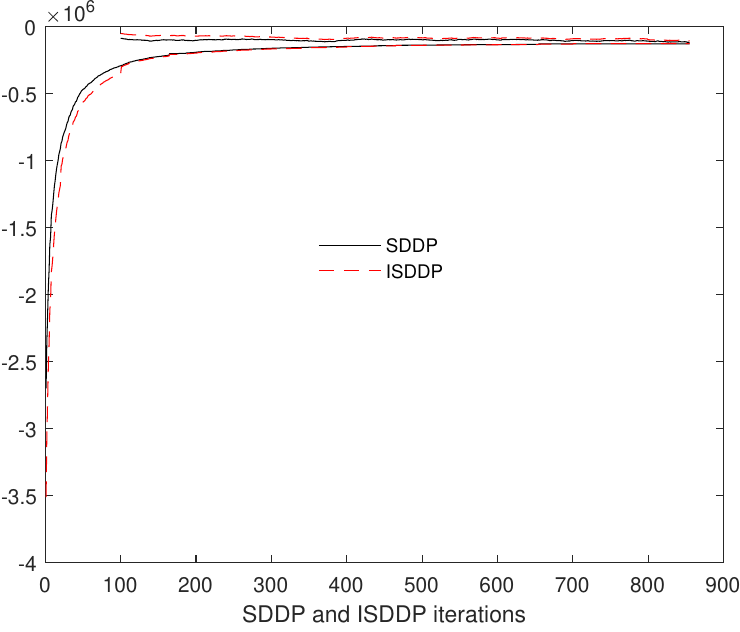}
\end{tabular}
\caption{Top plots: cumulative CPU time (in seconds), middle plots: total number of iterations to solve subproblems, bottom 
plots: upper and lower bounds. Left plots: $M=100,$ $T=10$, $n=50$, right plots: $M=100,$ $T=30$, and $n=50$.}
\label{fig1sddp}
\end{figure}

\begin{figure}
\centering
\begin{tabular}{cc}
\includegraphics[scale=0.6]{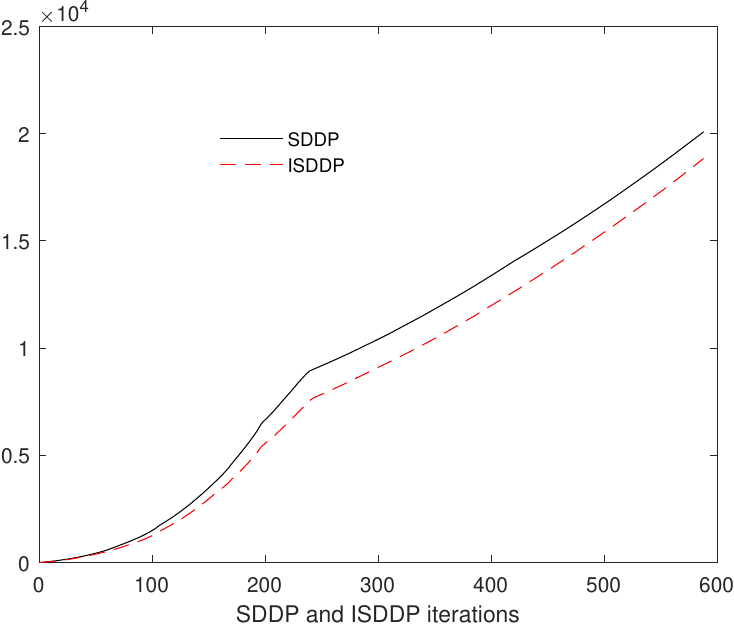}
&
\includegraphics[scale=0.6]{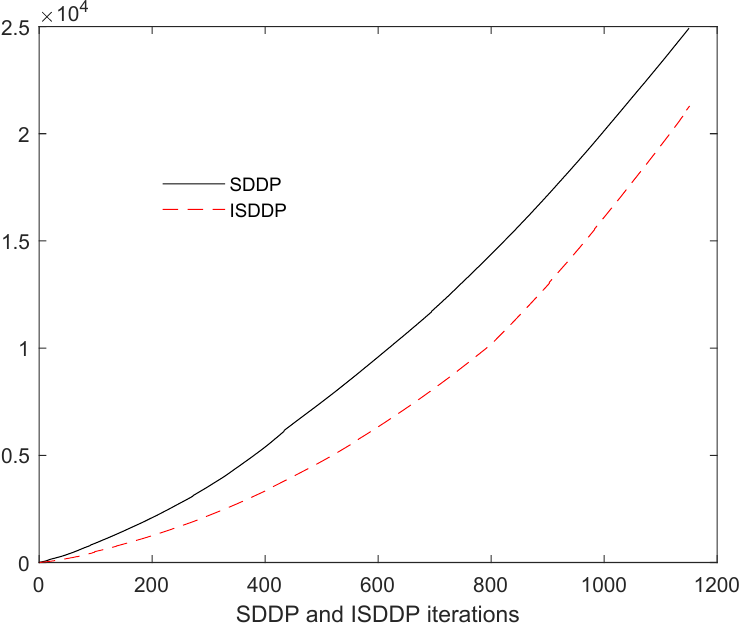}
\\
\includegraphics[scale=0.6]{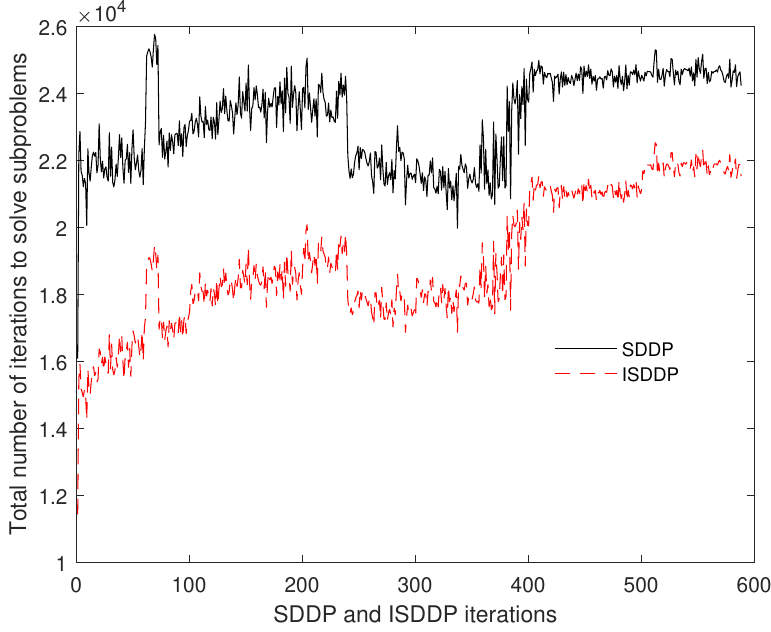}
&
\includegraphics[scale=0.6]{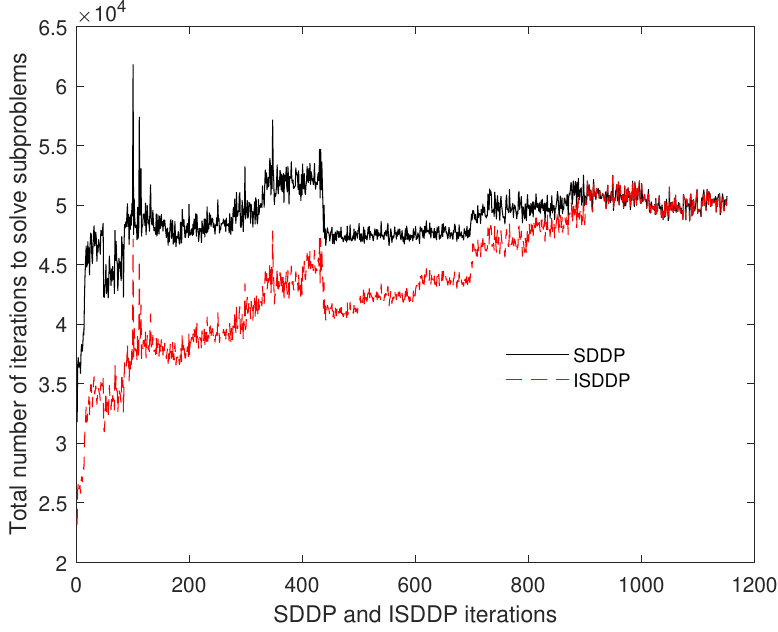}
\\
\includegraphics[scale=0.6]{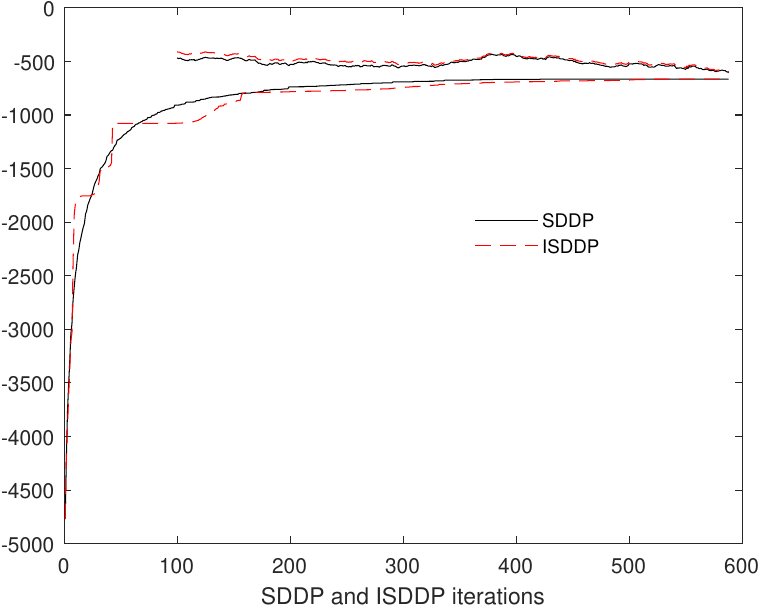}
&
\includegraphics[scale=0.6]{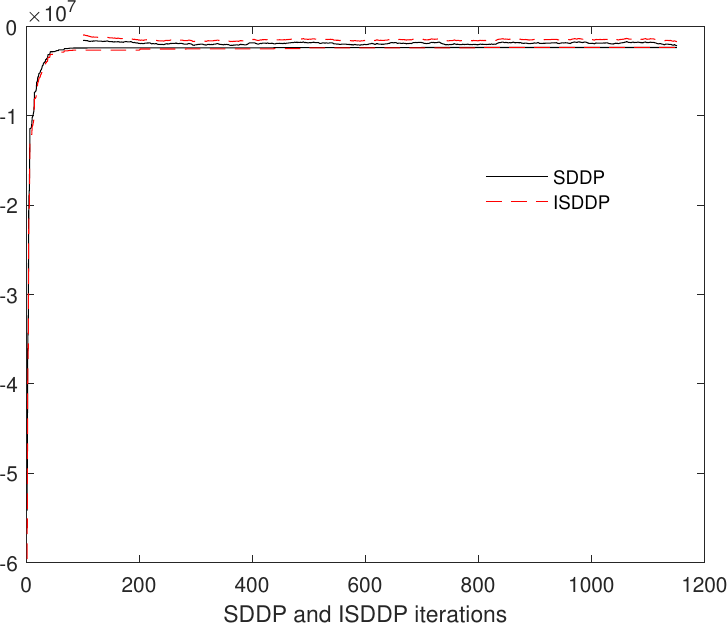}
\end{tabular}
\caption{Top plots: cumulative CPU time (in seconds), middle plots: total number of iterations to solve subproblems, bottom 
plots: upper and lower bounds. Left plots: $M=50,$ $T=20$, $n=50$, right plots: $M=50,$ $T=40$, and $n=10$.}\label{fig2sddp}
\end{figure}

\section{Conclusion}

We introduced IDDP-LP and ISDDP-LP, the first inexact variants of DPP and SDDP applied to 
respectively linear programs and
multistage stochastic linear programs. We studied the convergence 
of IDDP-LP and ISDDP-LP and presented the results of numerical experiments comparing the computational bulk 
of SDDP and ISDDP-LP on a portfolio problem.

Since ISDDP-LP can be much quicker than SDDP for some well chosen parameters $(\delta_t^k, \varepsilon_t^k)$
and is straightforward to implement from SDDP, it would be interesting to use ISDDP-LP
on other real-life applications modelled by multistage stochastic linear programs.

As a continuation of this work, it would also be interesting to consider a variant of SDDP
that builds cuts in the backward pass on the basis of approximate solutions which are not 
necessarily feasible (but of course asymptotically feasible to derive a convergence result).

\section*{Acknowledgments} The author's research was 
partially supported by an FGV grant, CNPq grants 311289/2016-9 and 
401371/2014-0, and FAPERJ grant E-26/201.599/2014. 

\nocite{*}

\addcontentsline{toc}{section}{References}
\bibliographystyle{plain}
\bibliography{Inexact_SDDP_MSLP}

\begin{thebibliography}{10}

\bibitem{mosek}
E.~D. Andersen and K.D. Andersen.
\newblock {\em The MOSEK optimization toolbox for MATLAB manual. Version 7.0},
  2013.
\newblock \url{https://www.mosek.com/}.

\bibitem{powellasamov}
T.~Asamov and W.~Powell.
\newblock Regularized decomposition of high-dimensional multistage stochastic
  programs with markov uncertainty.
\newblock {\em Available at: {{\tt https://arxiv.org/abs/1505.02227}}}, 2015.

\bibitem{benders}
J.F. Benders.
\newblock Partitioning procedures for solving mixed-variables programming
  problems.
\newblock {\em Numerische Mathematik}, 4:238--252, 1962.

\bibitem{birge-louv-book}
J.~Birge and F.~Louveaux.
\newblock {\em {Introduction to Stochastic Programming}}.
\newblock Springer-Verlag, New York, 1997.

\bibitem{birgemulti}
J.R. Birge.
\newblock Decomposition and partitioning methods for multistage stochastic
  linear programs.
\newblock {\em Oper. Res.}, 33:989--1007, 1985.

\bibitem{lecphilgirar12}
P.~Girardeau, V.~Leclere, and A.B. Philpott.
\newblock On the convergence of decomposition methods for multistage stochastic
  convex programs.
\newblock {\em Mathematics of Operations Research}, 40:130--145, 2015.

\bibitem{guiguescoap2013}
V.~Guigues.
\newblock {SDDP for some interstage dependent risk-averse problems and
  application to hydro-thermal planning}.
\newblock {\em Computational Optimization and Applications}, 57:167--203, 2014.

\bibitem{guiguessiopt2016}
V.~Guigues.
\newblock Convergence analysis of sampling-based decomposition methods for
  risk-averse multistage stochastic convex programs.
\newblock {\em SIAM Journal on Optimization}, 26:2468--2494, 2016.

\bibitem{guiguesejor2017}
V.~Guigues.
\newblock Dual dynamic programing with cut selection: Convergence proof and
  numerical experiments.
\newblock {\em European Journal of Operational Research}, 258:47--57, 2017.

\bibitem{guigues2016isddp}
V.~Guigues.
\newblock {Inexact cuts for Deterministic and Stochastic Dual Dynamic
  Programming applied to convex nonlinear optimization problems}.
\newblock {\em arXiv}, 2017.
\newblock \url{https://arxiv.org/abs/1707.00812}.

\bibitem{guiguesbandarra17}
V.~Guigues and M.~Bandarra.
\newblock Multicut decomposition methods with cut selection for multistage
  stochastic programs.
\newblock {\em Available at \url{https://arxiv.org/abs/1705.08977.}}, 2017.

\bibitem{guiguesrom10}
V.~Guigues and W.~R\"omisch.
\newblock Sampling-based decomposition methods for multistage stochastic
  programs based on extended polyhedral risk measures.
\newblock {\em SIAM J. Optim.}, 22:286--312, 2012.

\bibitem{guiguesrom12}
V.~Guigues and W.~R\"omisch.
\newblock {SDDP for multistage stochastic linear programs based on spectral
  risk measures}.
\newblock {\em Oper. Res. Lett.}, 40:313--318, 2012.

\bibitem{guilejtekregsddp}
V.~Guigues, W.~Tekaya, and M.~Lejeune.
\newblock Regularized decomposition methods for deterministic and stochastic
  convex optimization and application to portfolio selection with direct
  transaction and market impact costs.
\newblock {\em Optimization OnLine}, 2017.

\bibitem{morton}
G.~Infanger and D.~Morton.
\newblock Cut sharing for multistage stochastic linear programs with interstage
  dependency.
\newblock {\em Mathematical Programming}, 75:241--256, 1996.

\bibitem{kozmikmorton}
V.~Kozmik and D.P. Morton.
\newblock {Evaluating policies in risk-averse multi-stage stochastic
  programming}.
\newblock {\em {Mathematical Programming}}, 152:275--300, 2015.

\bibitem{pereira}
M.V.F. Pereira and L.M.V.G Pinto.
\newblock Multi-stage stochastic optimization applied to energy planning.
\newblock {\em Math. Program.}, 52:359--375, 1991.

\bibitem{pfeifferetalcuts}
Laurent Pfeiffer, Romain Apparigliato, and Sophie Auchapt.
\newblock Two methods of pruning benders' cuts and their application to the
  management of a gas portfolio.
\newblock {\em Research Report RR-8133, hal-00753578}, 2012.

\bibitem{dpcuts0}
A.~Philpott, V.~de~Matos, and E.~Finardi.
\newblock Improving the performance of stochastic dual dynamic programming.
\newblock {\em Journal of Computational and Applied Mathematics}, 290:196--208,
  2012.

\bibitem{philpot}
A.~B. Philpott and Z.~Guan.
\newblock On the convergence of stochastic dual dynamic programming and related
  methods.
\newblock {\em Oper. Res. Lett.}, 36:450--455, 2008.

\bibitem{shapsddp}
A.~Shapiro.
\newblock Analysis of stochastic dual dynamic programming method.
\newblock {\em European Journal of Operational Research}, 209:63--72, 2011.

\bibitem{shadenrbook}
A.~Shapiro, D.~Dentcheva, and A.~Ruszczy\'nski.
\newblock {\em {Lectures on Stochastic Programming: Modeling and Theory}}.
\newblock SIAM, Philadelphia, 2009.

\bibitem{philpzakeri}
G.~Zakeri, A.~Philpott, and D.~Ryan.
\newblock Inexact cuts in benders decomposition.
\newblock {\em Siam Journal on Optimization}, 10:643--657, 2000.

\bibitem{shabbirzou}
J.~Zou, S.~Ahmed, and X.A. Sun.
\newblock Stochastic dual dynamic integer programming.
\newblock {\em Optimization Online}, 2017.

\end{thebibliography}

\section*{Appendix}

\par {\textbf{Proof of Theorem \ref{convisddplp}.}}\\

\par (i) We show \eqref{lowerbounds} for $t=2,\ldots,T+1$, and all node $n$ of stage $t-1$ by backward induction on $t$. The relation
holds for $t=T+1$. Now assume that it holds for $t+1$ for some $t \in \{2,\ldots,T\}$. Let us show that it holds for $t$.
Take a node $n$ of stage $t-1$.
Observe that the sequence $\mathcal{Q}_t( x_n^k ) - \mathcal{Q}_t^k ( x_n^k )$ is almost surely bounded and nonnegative.
Therefore it has almost surely a nonnegative limit inferior and a finite limit superior.
Let $\mathcal{S}_n=\{k : n_t^k =n\}$ be the iterations where the sampled scenario passes through node $n$.
For $k \in \mathcal{S}_n$ we have
{\small{
\begin{equation}\label{eqproofconvisddp1}
\begin{array}{lll}
0 \leq \mathcal{Q}_t( x_n^k ) - \mathcal{Q}_t^k ( x_n^k ) & \leq &  \mathcal{Q}_t( x_n^k ) - \mathcal{C}_t^k ( x_n^k ) \\
& \leq & \varepsilon_t^k + \mathcal{Q}_t( x_n^k ) - {\underline{\mathcal{Q}}}_t^k ( x_n^k )\\
& \leq & \displaystyle {\bar \varepsilon} + \sum_{m \in C(n)} p_m  \Big[ \mathfrak{Q}_t(x_n^k , \xi_m ) -  {\underline{\mathfrak{Q}}}_t^{k}(x_n^k , \xi_m )\Big] \\
& \leq & \displaystyle {\bar \varepsilon} + \sum_{m \in C(n)} p_m  \Big[ \mathfrak{Q}_t(x_n^k , \xi_m ) -  {\underline{\mathfrak{Q}}}_t^{k-1}(x_n^k , \xi_m )\Big] \\
& \leq & \displaystyle {\bar \varepsilon} + \delta_t^k + \sum_{m \in C(n)} p_m  \Big[ \mathfrak{Q}_t(x_n^k , \xi_m ) -  \langle c_m , x_m^k \rangle - \mathcal{Q}_{t+1}^{k-1}( x_m^k ) \Big] \\
& \leq & \displaystyle {\bar \varepsilon} + {\bar {\delta}} + \sum_{m \in C(n)} p_m  \Big[ \underbrace{\mathfrak{Q}_t(x_n^k , \xi_m ) -  \langle c_m , x_m^k \rangle - \mathcal{Q}_{t+1}( x_m^k ) }_{\leq 0\mbox{ by definition of }\mathfrak{Q}_t\mbox{ and }x_m^k} + \mathcal{Q}_{t+1}( x_m^k )  -    \mathcal{Q}_{t+1}^{k-1}( x_m^k ) \Big] \\
& \leq & \displaystyle {\bar \varepsilon} + {\bar {\delta}} + \sum_{m \in C(n)} p_m  \Big[ \mathcal{Q}_{t+1}( x_m^k )  -    \mathcal{Q}_{t+1}^{k-1}( x_m^k ) \Big].
\end{array}
\end{equation}
}}

Using the induction hypothesis, we have for every $m \in C(n)$ that
$$
\varlimsup_{k \rightarrow +\infty} \mathcal{Q}_{t+1}( x_{m}^k ) - \mathcal{Q}_{t+1}^k( x_{m}^k ) \leq ({\bar \delta}  +  {\bar{\varepsilon}})(T-t).
$$
In virtue of Lemma \ref{limsuptechlemma}, this implies
\begin{equation}\label{convsupkm1}
\varlimsup_{k \rightarrow +\infty} \mathcal{Q}_{t+1}( x_{m}^k ) - \mathcal{Q}_{t+1}^{k-1}( x_{m}^k ) \leq ({\bar \delta}  +  {\bar{\varepsilon}})(T-t),
\end{equation}
which, plugged into \eqref{eqproofconvisddp1}, gives
\begin{equation}\label{eqconvinsn}
\varlimsup_{k \rightarrow +\infty, k \in \mathcal{S}_n} \mathcal{Q}_t( x_n^k ) - \mathcal{Q}_t^k ( x_n^k ) \leq ({\bar \delta}  +  {\bar{\varepsilon}})(T-t+1).
\end{equation}
Now let us show by contradiction that 
\begin{equation}\label{contrad1}
\varlimsup_{k \rightarrow +\infty} \mathcal{Q}_t( x_n^k ) - \mathcal{Q}_t^k ( x_n^k ) \leq ({\bar \delta}  +  {\bar{\varepsilon}})(T-t+1).
\end{equation}
If \eqref{contrad1} does not hold then there exists $\varepsilon_0>0$ such that
there is an infinite set of iterations $k$ satisfying 
$\mathcal{Q}_t( x_n^k ) - \mathcal{Q}_t^k ( x_n^k ) > ({\bar \delta}  +  {\bar{\varepsilon}})(T-t+1) + \varepsilon_0$
 and by monotonicity, there is also an infinite set of iterations $k$ in the set
 $K=\{k \geq 1 : \mathcal{Q}_t( x_n^k ) - \mathcal{Q}_t^{k-1} ( x_n^k ) > ({\bar \delta}  +  {\bar{\varepsilon}})(T-t+1) + \varepsilon_0\}$.
Let $k_1<k_2<...$ be these iterations: $K=\{k_1,k_2,\ldots,\}$.
Let $y_n^k$ be the random variable which takes the value 1 if $k \in \mathcal{S}_n$ and $0$ otherwise.
Due to Assumption (H3-S), random variables $y_n^{k_1},y_n^{k_2},\ldots,$ are i.i.d. and have the distribution of $y_n^1$. Therefore by the Strong Law of Large Numbers we get 
$$
\frac{1}{N} \displaystyle \sum_{j=1}^N y_n^{k_j}  \xrightarrow{N \rightarrow +\infty} \mathbb{E}[y_n^1]>0 \mbox{ a.s.}
$$
Now let $z_1<z_2<\ldots$ be the iterations in $\mathcal{S}_n$: $\mathcal{S}_n=\{z_1,z_2,\ldots\}$.
Relation \eqref{eqconvinsn} can be written 
$$
\varlimsup_{k \rightarrow +\infty} \mathcal{Q}_t( x_n^{z_k} ) - \mathcal{Q}_t^{z_k} ( x_n^{z_k} ) \leq ({\bar \delta}  +  {\bar{\varepsilon}})(T-t+1),
$$
which, using Lemma \ref{limsuptechlemma}, implies
$$
\varlimsup_{k \rightarrow +\infty} \mathcal{Q}_t( x_n^{z_k} ) - \mathcal{Q}_t^{z_{k-1}} ( x_n^{z_k} ) \leq ({\bar \delta}  +  {\bar{\varepsilon}})(T-t+1).
$$
Using the fact that $z_k \geq z_{k-1}+1$, we deduce that
$$
\begin{array}{lll}
\varlimsup_{k \rightarrow +\infty, k \in \mathcal{S}_n} \mathcal{Q}_t( x_n^k ) - \mathcal{Q}_t^{k-1} ( x_n^k ) & = &
\varlimsup_{k \rightarrow +\infty} \mathcal{Q}_t( x_n^{z_k} ) - \mathcal{Q}_t^{z_{k}-1} ( x_n^{z_k} ) \\
& \leq & 
\varlimsup_{k \rightarrow +\infty} \mathcal{Q}_t( x_n^{z_k} ) - \mathcal{Q}_t^{z_{k-1}} ( x_n^{z_k} ) \leq ({\bar \delta}  +  {\bar{\varepsilon}})(T-t+1).
\end{array}
$$
Therefore, there can only be a finite number of iterations that are both in $K$ and in $\mathcal{S}_n$. This gives
$$
\frac{1}{N} \displaystyle \sum_{j=1}^N y_n^{k_j}  \xrightarrow{N \rightarrow +\infty} 0 \mbox{ a.s.}
$$
We obtain a contradiction and therefore \eqref{contrad1} must hold.

\par (ii) Using \eqref{eqproofconvisddp1}, we obtain for every $t=2,\ldots,T+1$, and every node $n$ of stage $t-1$, that
\begin{equation}
0 \leq   \sum_{m \in C(n)} p_m \Big[ c_m^T x_m^k   + \mathcal{Q}_{t+1}( x_m^k )\Big]  - \mathcal{Q}_t( x_n^k ) 
\leq {\bar \delta}  +  {\bar{\varepsilon}} + \sum_{m \in C(n)} p_m \Big[ \mathcal{Q}_{t+1}( x_m^k ) - \mathcal{Q}_{t+1}^{k-1}( x_m^k ) \Big].
\end{equation}
Therefore
$$
\varliminf_{k \rightarrow +\infty} \sum_{m \in C(n)} p_m \Big[ c_m^T x_m^k   + \mathcal{Q}_{t+1}( x_m^k ) \Big] -  \mathcal{Q}_t( x_n^k )  \geq 0 \\
$$
and using \eqref{convsupkm1} we get
$$
\varlimsup_{k \rightarrow +\infty} \sum_{m \in C(n)} p_m \Big[ c_m^T x_m^k   + \mathcal{Q}_{t+1}( x_m^k ) \Big]  -  \mathcal{Q}_t(x_n^k ) \leq  ({\bar \delta}  +  {\bar{\varepsilon}})(T-t+1).
$$

\par (iii) We have 
\begin{equation}\label{q1iii}
\begin{array}{lll}
\mathcal{Q}_1 ( x_0 ) \geq {\underline{\mathfrak{Q}}}_1^{k-1} ( x_0 , \xi_1 )  & \geq &
c_1^T x_1^k + \mathcal{Q}_2^{k-1}( x_1^k ) - \delta_1^k \\
& \geq & -{\bar{\delta}} + \mathcal{Q}_1 ( x_0 ) + \mathcal{Q}_2^{k-1}( x_1^k ) - \mathcal{Q}_2 ( x_1^k ). 
\end{array}
\end{equation}
Using \eqref{q1iii} and  \eqref{convsupkm1} with $t=1$, we obtain (iii).\\

\par {\textbf{Additional parameters for ISDDP.}} 
For ISDDP, the maximal
number of iterations allowed for Mosek LP solver to solve subproblems along the iterations of 
ISDDP is given in Table \ref{tablenumberiter0}.

\begin{table}
\centering
\begin{tabular}{c}
\begin{tabular}{|c|c|c|c|}
\hline
\begin{tabular}{c}ISDDP\\iteration\end{tabular} &  $[1,20]$ & $[21,50]$ &  $[51,100]$    \\
\hline
\begin{tabular}{l}
LP solver\\maximal\\
number of\\
iterations at $t$
\end{tabular}
& $\left \lceil (0.4+ 0.6 \frac{(t-2)}{T-2})I_{\max} \right \rceil$ &
$\left \lceil (0.45 + 0.55\frac{(t-2)}{T-2})I_{\max} \right \rceil$  & 
$\left \lceil (0.5 + 0.5\frac{(t-2)}{T-2})I_{\max} \right \rceil$ \\
\hline
\end{tabular}
\\
\begin{tabular}{|c|c|c|c|}
\hline
\begin{tabular}{c}ISDDP\\iteration\end{tabular}  & $[101,200]$  &  $[201,300]$ & $[301,400]$  \\
\hline
\begin{tabular}{l}
LP solver\\maximal\\
number of\\
iterations at $t$
\end{tabular}
&
$\left \lceil(0.55+ 0.45\frac{(t-2)}{T-2})I_{\max} \right \rceil$
& $\left \lceil(0.6+ 0.4\frac{(t-2)}{T-2})I_{\max} \right \rceil$ & 
$\left \lceil (0.65+ 0.35\frac{(t-2)}{T-2})I_{\max} \right \rceil$ \\
\hline
\end{tabular}
\\
\begin{tabular}{|c|c|c|c|}
\hline
\begin{tabular}{c}ISDDP\\iteration\end{tabular}  & $[401,500]$  &  $[501,600]$ & $[601,700]$  \\
\hline
\begin{tabular}{l}
LP solver\\maximal\\
number of\\
iterations at $t$
\end{tabular}
&
$\left \lceil(0.7+ 0.3\frac{(t-2)}{T-2})I_{\max} \right \rceil$
& $\left \lceil(0.75+ 0.25\frac{(t-2)}{T-2})I_{\max} \right \rceil$ & 
$\left \lceil (0.8+ 0.2\frac{(t-2)}{T-2})I_{\max} \right \rceil$ \\
\hline
\end{tabular}
\\
\begin{tabular}{|c|c|c|c|}
\hline
\begin{tabular}{c}ISDDP\\iteration\end{tabular}  & $[701,800]$  &  $[801,900]$ & $>900$  \\
\hline
\begin{tabular}{l}
LP solver\\maximal\\
number of\\
iterations at $t$
\end{tabular}
&
$\left \lceil(0.85+ 0.15\frac{(t-2)}{T-2})I_{\max} \right \rceil$
& $\left \lceil(0.9+ 0.1\frac{(t-2)}{T-2})I_{\max} \right \rceil$ & 
$I_{\max}$ \\
\hline
\end{tabular}
\end{tabular}
\caption{Maximal number of iterations for Mosek LP solver for solving backward and forward passes 
subproblems
as a function of stage $t \geq 2$, ISDDP iteration, and the number $I_{\max}$ of iterations 
used to solve subproblems with SDDP with high accuracy.
In this table, $\left \lceil x \right \rceil$ is the smallest integer 
larger than or equal to $x$.}\label{tablenumberiter0}
\end{table}

\end{document}